\newtheorem{theorem}{Theorem}
\newtheorem*{theorem*}{Theorem}
\newtheorem{proposition}[theorem]{Proposition}
\newtheorem{lemma}[theorem]{Lemma}
\newcommand{\Z}{\mathbb{Z}}
\newcommand{\Q}{\mathbb{Q}}
\newcommand{\R}{\mathbb{R}}
\newcommand{\C}{\mathbb{C}}
\newcommand{\Ci}{\mathbb{C}_\infty}
\newcommand{\SL}{\mathrm{SL}}
\newcommand{\GL}{\mathrm{GL}}
\newcommand{\PGL}{\mathrm{PGL}}
\newcommand{\re}{\mathrm{Re}}
\newcommand{\e}{\mathrm{e}}
\newcommand{\D}{\mathcal{D}}
\newcommand{\T}{\mathcal{T}}
\newcommand{\E}{\mathcal{E}}
\newcommand{\Po}{\mathcal{P}}
\newcommand{\den}{\mathrm{den}}
\newcommand{\aff}{\mathrm{aff}}
\newcommand{\Tr}{\mathrm{T}}
\newcommand{\M}{\mathcal{M}}
\newcommand{\Mat}{\mathrm{M}_{2\times 1}}
\newcommand{\Mer}{\mathrm{Mer}(\C;0)}
\newcommand{\Mirr}{\mathrm{M}^{\mathrm{Irr}}}
\newcommand{\sgn}{\mathrm{sgn}}
\newcommand{\diag}{\mathrm{diag}}
\newcommand{\Ber}{\bar{B}}
\newcommand{\Ze}{\mathcal{Z}}
\newcommand{\Zet}{\widetilde{\mathcal{Z}}}
\newcommand{\ind}{\mathbf{1}}
\newcommand{\fid}{\mathfrak{f}}
\newcommand{\bid}{\mathfrak{b}}
\newcommand{\cid}{\mathfrak{c}}
\newcommand{\aid}{\mathfrak{a}}
\newcommand{\gid}{\mathfrak{g}}
\newcommand{\Ok}{\mathcal{O}}
\newcommand{\G}{\mathrm{G}}
\newcommand{\W}{\mathrm{M}_{2\times1}^{\mathrm{Irr}}(\R)}
\newcommand{\ZZ}{\mathfrak{Z}}
\newcommand{\A}{\mathcal{A}}
\newcommand{\lcm}{\mathrm{lcm}}
\newcommand{\ie}{{\it{i.$\,$e.\ }}}
\begin{document}
\title[The Barnes-Hurwitz zeta cocycle at $s=0$]{The Barnes-Hurwitz zeta cocycle at $s=0$ and Ehrhart quasi-polynomials of triangles}
\author{Milton Espinoza}
\subjclass[2020]{Primary 11M41, 11R42; Secondary 05A15, 52C05}
\keywords{Barnes zeta function, Hurwitz zeta function, Special values, Group cocycles, Rational polytopes, Ehrhart quasi-polynomials}
\email{milton.espinozae@userena.cl}
\address{Departamento de Matem\'aticas, 
   Facultad de Ciencias, 
   Universidad de La Serena, 
   Juan Cisternas 1200, La Serena,
   Chile}
\thanks{This work was partially supported by the Chilean FONDECYT Iniciaci\'on grant 11200482}

\begin{abstract}
Following a theorem of David R. Hayes, we give a geometric interpretation of the special value at $s=0$ of certain $1$-cocycle on $\PGL_2(\Q)$ previously introduced by the author. This work yields three main results: an explicit formula for our cocycle at $s=0$, a generalization and a new proof of Hayes' theorem, and an elegant summation formula for the $0$th coefficient of the Ehrhart quasi-polynomial of certain triangles in $\R^2$. 
\end{abstract}

\maketitle

\section*{Introduction}
In \cite{Ha88}, Hayes gave the following geometric interpretation of the special value at $s=0$ of ray class zeta functions over real quadratic fields:
\begin{align}\label{Hayes.formula}
\zeta_{\fid}(\bid,0)-m(\fid)\zeta_{\Ok_k}(\bid\fid^{-1},0)=\Big(\sum_{\lambda\in 1+\Lambda}w_\fid(\lambda)\Big) - \frac{\mathrm{area}(\Delta_\fid)}{\mathrm{vol(\Lambda)}}-\frac{1}{2}.
\end{align}
On the left-hand side of the above formula we find arithmetic data: $\fid$ and $\bid$ are relatively prime integral ideals of a real quadratic field $k$ having ring of integers $\Ok_k$; $m(\fid)$ is the order of the quotient group $\Ok_{k,+}^\times/\Ok^\times_{k,+,\fid}$, where $\Ok_{k,+}^\times$ and $\Ok_{k,+,\fid}^\times$ are respectively the group of totally positive units of $k$ and the group of totally positive units of $k$ congruent to $1$ modulo $\fid$; $\zeta_\gid(\cid,0)$ is the value at $s=0$ of the meromorphic continuation to $\C$ of 
\begin{align*}
\zeta_\gid(\cid,s):=\sum_{\mathfrak{a}}\mathrm{N}(\mathfrak{a})^{-s}\qquad\qquad(\re(s)>1),
\end{align*}
where the sum runs over all integral ideals $\aid$ of $k$ lying in the same narrow ray class of $\cid$ modulo $\gid$, and $\mathrm{N}(\aid)=\#(\Ok_k/\aid)$ is the norm of $\aid$.

On the right-hand side of \eqref{Hayes.formula} we have geometric data attached to $k$. Indeed, fix a pair of embeddings $x\mapsto x^{(1)}$ and $x\mapsto x^{(2)}$ of $k$ into $\R$. Then identify $k$ with its image in $\R^2$ through $x\mapsto(x^{(1)},x^{(2)})$. Hence $\Lambda$ denotes $\bid^{-1}\fid$ as a lattice in $\R^2$, whereas $1+\Lambda$ is the affine lattice $1+\bid^{-1}\fid$; $\Delta_\fid$ is the triangle in $\R^2$ with vertices at $0$, $1$, and $\epsilon_\fid$, where $\epsilon_\fid$ is the generator of $\Ok_{k,+,\fid}^\times$ such that $\epsilon_\fid^{(1)}<\epsilon_\fid^{(2)}$; $\mathrm{area}(\Delta_\fid)$ is the usual area of $\Delta_\fid$;
\begin{align*}
\mathrm{vol}(\Lambda):=\left|\det\begin{pmatrix}
\rho^{(1)}&\rho^{(2)}\\ \tau^{(1)}&\tau^{(2)}
\end{pmatrix}\right|
\end{align*}
for any $\Z$-basis $\{\rho,\tau\}$ of $\bid^{-1}\fid$; $w_\fid$ is the real-valued function on $\R^2$ with support $\Delta_\fid$ such that $w_\fid$ equals $1$ on the interior of $\Delta_\fid$ and $1/2$ on its boundary.\footnote{Hayes' original version of \eqref{Hayes.formula} omits the $1/2$ on the right-hand side at the cost of shrinking the support of $w_\fid$ by discarding a portion of the boundary of $\Delta_\fid$.}

The proof of \eqref{Hayes.formula} uses Shintani's method \cite{Sh76}, which in particular yields the formula 
\begin{align}\label{Shintani.formula.s=0}
\zeta_\gid(\cid,0)=\frac{1}{2}\left(\alpha^{(1)}+\alpha^{(2)}\right),
\end{align}
where $\alpha\in k$ is an algebraic number depending on $\gid$ and $\cid$. The most challenging part of Hayes' proof comes from comparing $\zeta_{\fid}(\bid,0)$ with $m(\fid)\zeta_{\Ok_k}(\bid\fid^{-1},0)$, which required him to cleverly decompose a certain cone-shaped fundamental domain for the multiplicative action of $\Ok_{k,+,\fid}$ on $\R_{>0}^2$, as well as to choose a very special common coordinate system to make computations. In \cite[p. 212]{Ha88}, Hayes states that it would be interesting to derive \eqref{Hayes.formula} directly from Siegel's formula for $\zeta_\gid(\cid,0)$, which essentially asks for obtaining \eqref{Hayes.formula} through manipulations of certain Bernoulli polynomials. We cover somewhat the latter approach in this paper.

Instead of using \eqref{Shintani.formula.s=0} as our model, we follow a result due to Yoshida \cite[Ch. IV, Cor 6.3]{Yo03}, which implies that the $\alpha$ in \eqref{Shintani.formula.s=0} is actually rational, so 
\begin{align}\label{Yoshida.formula.s=0}
\zeta_\gid(\cid,0)=\alpha.
\end{align}
Hence the study of $\zeta_\gid(\cid,0)$ is equivalent to the study of certain linear combinations of the Barnes' double zeta function $\zeta_2(s,\omega,s)$ and the Hurwitz zeta function $\zeta(z,s)$ at $s=0$, where $\zeta_2(s,\omega,s)$ is defined as the meromorphic continuation to $\C$ of the absolutely convergent Dirichlet series
\begin{align*}
\zeta_2(z,\omega,s):=\sum_{m=0}^\infty\sum_{n=0}^\infty(z+m\omega_1+n\omega_2)^{-s}, \qquad\omega=\begin{pmatrix}
\omega_1\\\omega_2
\end{pmatrix},
\end{align*}
for $z, \omega_1, \omega_2>0$ and $\re(s)>2$, and
$\zeta(z,s)$ is the meromorphic continuation to $\C$ of the absolutely convergent Dirichlet series
\begin{align*}
\zeta(z,s):=\sum_{n=0}^\infty(z+n)^{-s} \qquad(z>0, \ \re(s)>1).
\end{align*}

In \cite{Es22} we studied a 1-cocycle $\ZZ$ on $\PGL_2(\Q)$ that organizes algebraically the aforementioned linear combinations (for any complex number $s$), and the motivation of this paper is to reproduce Equations \eqref{Yoshida.formula.s=0} and \eqref{Hayes.formula} from this standpoint. In fact, we generalize both equations and we also use different techniques in order to get them (see Theorems \ref{ZZ_0.thm} and \ref{Hayes.Espinoza.thm.}). Precisely, whereas Yoshida's proof of \eqref{Yoshida.formula.s=0} makes use of Baker's theorem on logarithms of algebraic numbers,\footnote{It should be mentioned that Yoshida's result includes totally real number fields in general.} ours uses Fourier analysis on the real line. On the other hand, in addressing \eqref{Hayes.formula}, we replace Hayes' decomposition of the conical fundamental domain above with the use of elementary properties of the periodic Bernoulli functions
\begin{align}\label{Bernoulli.defi}
\Ber_2(y):=\{y\}^2-\{y\}+1/6 \qquad\text{and}\qquad \Ber_1(y):=\begin{cases}0&\text{if $y\in\Z$,}\\ \{y\}-1/2& \text{if $y\notin\Z$,}\end{cases}
\end{align}
where $\{y\}:=y-\lfloor y\rfloor$ for any $y\in\R$. 

Finally, as an application of our generalized version of Equation \eqref{Hayes.formula} and its interpretation in terms of a group cocycle, we give an elegant summation formula involving a part of the lattice point enumerator 
\begin{align*}
\G(T,m):=\#(\Z^2\cap m T) \qquad\qquad (m\in\Z_{\geq0})
\end{align*}
of certain triangles $T\subset\R^2$ having vertices in $\Q^2$ and parameterized by $\SL_2(\Z)$ matrices, thus making contact with the Ehrhart theory of polytopes. More precisely, it is known that $\G(T,m)$ is a \textit{quasi-polynomial in $m$} \cite{Eh62}, \ie
\begin{align*}
\G(T,m)=\G_2(T,m)m^2+\G_1(T,m)m+\G_0(T,m) \qquad(m\in\Z_{\geq0}),
\end{align*}
where each $\G_i(T):\Z_{\geq0}\to\R$ is a function satisfying $\G_i(T,m+d)=\G_i(T,m)$ for all $m\in\Z_{\geq0}$, where $d$ is the smallest positive integer such that $dT$ has vertices in $\Z^2$. For the triangles $T$ we are interested in, a theorem of McMullen \cite{Mc78} implies that $\G_1(T)$ and $\G_2(T)$ have actually period $1$, so they are constants. Moreover, $\G_1(T,m)$ and $\G_2(T,m)$ can be explicitly computed in terms of $d$, $\mathrm{area}(dT)$, and the number of integral points in the boundary of $dT$. Hence the computation of $\G(T,m)$ is reduced to the study of $\G_0(T,m)$, the latter being the main ingredient of our summation formula \eqref{Ehrhart.Espinoza.formula}. It is worth mentioning that we have not been able to find any parallel of \eqref{Ehrhart.Espinoza.formula} in the literature.

This article is organized as follows. In Section \ref{Main.results.sect.} we introduce the bulk of the notation employed subsequently, and we also state our main results without proofs. Then we address respectively the proofs of Theorems \ref{ZZ_0.thm}, \ref{Hayes.Espinoza.thm.}, and \ref{Ehrhart.Espinoza.thm} in Sections \ref{ZZ_0.sect.}, \ref{Hayes.Espinoza.sect.}, and \ref{Ehrhart.Espinoza.sect.}.

\section{Main results}\label{Main.results.sect.}
From now on $\M$ denotes the set of $2\times2$ integer matrices having nonzero determinant, $\W$ is the set of $2\times1$ real matrices having linearly independent entries over $\Q$, and $G:=\PGL_2(\Q)$. 

Let $\A_0$ be the set of all continuous functions $F:\W\to\R$ such that $F(\alpha\omega)=F(\omega)$ for all $\alpha\in\R_{>0}$ and $\omega\in\W$, where $\W$ and $\R$ have the Euclidean topology. Then $\A_0$ is an additive abelian group and there is an action `$\star$' of the multiplicative monoid $\M$ on $\A_0$ given by 
\begin{align*}
(\gamma\star F)(\omega):=\sgn(\det\gamma)F(\gamma^\Tr\omega) \qquad\text{for all $\gamma\in\M$, $F\in\A_0$, and $\omega\in\W$.}
\end{align*}
Let $D_0=D_0(\Q^2/\Z^2,\A_0)$ denote the set of \textit{$\A_0$-valued homogeneous distributions}, \ie the collection of all maps $\nu:\Q^2/\Z^2\to\A_0$ such that 
\begin{align}\label{dist.rela}
\nu(x)=\sum_{\substack{y\in\Q^2/\Z^2\\cy=x}}\nu(y)\qquad\text{for all $x\in\Q^2/\Z^2$ and $c\in\Z_{>0}$.}
\end{align}
Then $D_0$ is an additive abelian group acted on by $\M$ via
\begin{align*}
(\gamma\cdot\nu)(x):=\gamma\star\sum_{\substack{y\in\Q^2/\Z^2\\y\gamma^\Tr=x}}\nu(y) \qquad\text{for all $\gamma\in\M$, $\nu\in D_0$, and $x\in\Q^2/\Z^2$.}
\end{align*} 
Hence we use \eqref{dist.rela} to extend `$\cdot$' to an action of $\GL_2(\Q)$ on $D_0$. Indeed, for any $\gamma\in\GL_2(\Q)$ and any $\nu\in D_0$, we define $\gamma\cdot \nu:=(c\gamma)\cdot\nu$ for any $c\in\Z_{>0}$ such that $c\gamma\in\M$. Since $\nu$ satisfies \eqref{dist.rela}, the above extension is well-defined (\ie it is independent of the choice of $c$). Finally, it follows that $G$ acts on the $\GL_2(\Q)$-submodule $\D_0:=D_0^{\langle -I_2\rangle}$ of $D_0$ fixed by $-I_2$. Note that `$\cdot$' can be alternatively given by the identity
\begin{align*}
(\gamma\cdot \nu)(x,\omega)=\sgn(\det\gamma)\sum_{\mu\in\Z^2/\Z^2\gamma^\Tr}\nu\big((x+\mu)\gamma^{-\Tr},\gamma^\Tr\omega\big)
\end{align*}
valid for all $\gamma\in\M$, $\nu\in D_0$, and $(x,\omega)\in\Q^2/\Z^2\times\W$.

In order to state our generalization of \eqref{Yoshida.formula.s=0}, it only remains to define what will turn out to be an explicit version of $\ZZ$ at $s=0$ (see \cite[Thm 1]{Es22}). Firstly, recall the Dedekind-Rademacher sum, which is defined via the formula
\begin{align}\label{Dede-Rade.sum}
S(a,c;x)=\sum_{m\in\Z/c\Z}\Ber_1\!\Big(x_1+(x_2+m)\frac{a}{c}\Big)\Ber_1\!\Big((x_2+m)\frac{1}{c}\Big)
\end{align}
valid for any $a, c\in\Z$ with $c\not=0$, and any $x\in\R^2$. Then, for any
\begin{align*}
\gamma=\begin{pmatrix}a&b\\c&d\end{pmatrix}\in\M \qquad\text{and}\qquad (x,\omega)\in\Q^2\times\W,
\end{align*}
put $\ZZ_0(\gamma)(x,\omega):=0$ if $c=0$ and 
\begin{align}\label{ZZ_0.defi.}
\ZZ_0(\gamma)(x,\omega) \, := \, \frac{\gcd(a,c)^2}{2c(a+c\tau)}\,\Ber_2\!\Big(\frac{c}{\gcd(a,c)}x_1-\frac{a}{\gcd(a,c)}x_2\Big)\ + \ \frac{a+c\tau}{2c}\,\Ber_2(x_2)\\\nonumber + \ \sgn(c)\,S(-a,c;x) \ - \ \frac{\sgn(c(a+c\tau))}{4}\,\ind_{\Z}(x_1)\,\ind_{\Z}(x_2)
\end{align}
otherwise, where $\tau:=\omega_2/\omega_1$ and $\ind_X$ is the indicator function of a subset $X$ of $\R$.

\begin{theorem}\label{ZZ_0.thm}
The function $\ZZ_0:G\to\D_0$ induced by the rule above is an inhomogeneous 1-cocycle whose class in the first cohomology group $H^1(G,\D_0)$ is nonzero. Furthermore, let
\begin{align*}
\Ze_0:=\ZZ_0\!\begin{pmatrix}0&-1\\1&0\end{pmatrix}, \quad \gamma=\begin{pmatrix}a&b\\c&d\end{pmatrix}\in\M, \quad\text{and}\quad (x,\omega)\in\Q^2\times\W.
\end{align*}
\begin{enumerate}[$(i)$]
\item If both $\omega_1$ and $\omega_2$ are positive, then 
\begin{align*}
\Ze_0(x,\omega)=\zeta_2\big(z(x,\omega),\omega,0\big)+\frac{\ind_\Z(x_1)}{2}\zeta\big(\langle x_2\rangle,0\big)+\frac{\ind_\Z(x_2)}{2}\zeta\big(\langle x_1\rangle,0\big),
\end{align*}
where $\langle\lambda\rangle$ is the unique real number in the interval $(0,1]$ such that $\langle\lambda\rangle-\lambda$ is an integer for any $\lambda\in\R$ and $z(x,\omega):=\langle x_1\rangle\omega_1+\langle x_2\rangle\omega_2$.
\item In general,
\begin{align*}
\ZZ_0(\gamma)=\begin{pmatrix}1&a\\0&c\end{pmatrix}\cdot\Ze_0 \qquad\text{if $c\not=0$.}
\end{align*} 
\item\label{ZZ_0.thm.rati.} Suppose that $\gamma\in\SL_2(\Z)$ and there exists $\alpha\in\R$ such that 
\begin{align*}
\gamma^\Tr\omega=\alpha\omega\qquad\text{and}\qquad\alpha^{-1}v\equiv v\mod\Lambda,
\end{align*}
where $v:=x_1\omega_1+x_2\omega_2$ and $\Lambda:=\Z\omega_1\oplus\Z\omega_2$. Then 
\begin{align*}
\ZZ_0(\gamma)(x,\omega)=\frac{a+d}{2c}\Ber_2(x_2)+\sgn(c)S(-a,c;x)-\frac{\sgn(c\alpha)}{4}\ind_\Z(x_1)\ind_\Z(x_2),
\end{align*}
so $\ZZ_0(\gamma)(x,\omega)$ is a rational number.
\end{enumerate}
\end{theorem}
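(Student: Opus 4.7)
My plan is to address the three numbered assertions of the theorem in order, reading off the cocycle relation and the non-triviality of $[\ZZ_0]$ as by-products.

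For (i), substituting $\gamma = \begin{pmatrix}0&-1\\1&0\end{pmatrix}$ into \eqref{ZZ_0.defi.} collapses $S(0,1;x)$ to $\Ber_1(x_1)\Ber_1(x_2)$ and yields
\begin{align*}
\Ze_0(x,\omega) = \frac{\omega_1}{2\omega_2}\Ber_2(x_1) + \frac{\omega_2}{2\omega_1}\Ber_2(x_2) + \Ber_1(x_1)\Ber_1(x_2) - \frac{\sgn(\omega_2/\omega_1)}{4}\ind_\Z(x_1)\ind_\Z(x_2).
\end{align*}
On the other hand, the classical Barnes evaluation
\begin{align*}
\zeta_2(z,\omega,0) = \frac{1}{2\omega_1\omega_2}\left[z^2 - (\omega_1+\omega_2)z + \tfrac{\omega_1^2+3\omega_1\omega_2+\omega_2^2}{6}\right],
\end{align*}
derived from the generating function of the double Bernoulli polynomials, applied at $z = \langle x_1\rangle\omega_1 + \langle x_2\rangle\omega_2$, rearranges into $\tfrac{\omega_1}{2\omega_2}\Ber_2(\langle x_1\rangle) + \tfrac{\omega_2}{2\omega_1}\Ber_2(\langle x_2\rangle) + (\langle x_1\rangle-\tfrac12)(\langle x_2\rangle-\tfrac12)$. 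Since $\Ber_2(x_i) = \Ber_2(\langle x_i\rangle)$, identity (i) reduces to a short four-case check depending on whether $x_1,x_2 \in \Z$, using $\zeta(y,0) = \tfrac12 - y$ and the definitional discrepancy between $\Ber_1(x_i)$ and $\langle x_i\rangle - \tfrac12$ at integer points.

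For (ii), I would directly expand the action $\tilde\gamma\cdot\Ze_0$ with $\tilde\gamma = \begin{pmatrix}1&a\\0&c\end{pmatrix}$. Since $\det\tilde\gamma = c$ and $\Z^2/\Z^2\tilde\gamma^\Tr$ has representatives $\{(0,k) : k\in\Z/c\Z\}$, substituting the formula for $\Ze_0$ into the action produces four sums; these are evaluated via the distribution relations
\begin{align*}
\sum_{k\in\Z/c\Z}\Ber_2\!\left(x_1 - \tfrac{a(x_2+k)}{c}\right) = \tfrac{\gcd(a,c)^2}{|c|}\Ber_2\!\left(\tfrac{c}{\gcd(a,c)}x_1 - \tfrac{a}{\gcd(a,c)}x_2\right), \qquad \sum_{k}\Ber_2\!\left(\tfrac{x_2+k}{c}\right) = \tfrac{\Ber_2(x_2)}{|c|},
\end{align*}
together with \eqref{Dede-Rade.sum} recognising the third sum as $S(-a,c;x)$ and a divisibility check for the indicator term. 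Matching terms recovers \eqref{ZZ_0.defi.}. Combined with the manifest vanishing of $\ZZ_0$ on upper triangular matrices ($c=0$ in the definition) and the Bruhat factorisation $\gamma = \tilde\gamma \cdot \begin{pmatrix}0&-1\\1&0\end{pmatrix} \cdot \beta$ with $\beta = \begin{pmatrix}1 & d/c\\ 0 & \det\gamma/c\end{pmatrix}$ upper triangular in $\PGL_2(\Q)$, (ii) yields the cocycle relation on all of $G$. Alternatively, one can identify $\ZZ_0$ with the $s=0$ specialization of $\ZZ$ of \cite[Thm~1]{Es22} and import the cocycle property. The principal technical obstacle here is the sign bookkeeping in the distribution sums for $c<0$, mitigated by the evenness and periodicity of $\Ber_2$.

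For (iii), from $\gamma^\Tr\omega=\alpha\omega$ and $\gamma^{-\Tr}\omega=\alpha^{-1}\omega$ one extracts $\alpha = a+c\tau$, $\alpha^{-1} = d - c\tau$, hence $\alpha + \alpha^{-1} = a+d$. Expanding $\alpha^{-1}v \equiv v \pmod{\Lambda}$ in the $\Z$-basis $\{\omega_1,\omega_2\}$ yields $-cx_1+(a-1)x_2 \in \Z$, i.e., $cx_1 - ax_2 \equiv -x_2 \pmod{\Z}$; since $\Ber_2$ descends to $\R/\Z$ and is even, and $\gcd(a,c)=1$ (by $\det\gamma=1$), the first two terms of \eqref{ZZ_0.defi.} collapse into $\tfrac{a+d}{2c}\Ber_2(x_2)$, while the remaining two are unchanged, establishing the claimed identity and rationality (since $x_1,x_2\in\Q$). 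Finally, for the non-triviality of $[\ZZ_0] \in H^1(G,\D_0)$, I would evaluate $\ZZ_0$ on powers of a hyperbolic $\gamma_0 \in \SL_2(\Z)$ with real-quadratic fixed direction $\omega \in \W$, use (iii), and show that the resulting Dedekind--Rademacher contribution cannot be absorbed into any coboundary $\gamma \cdot \nu - \nu$; the obstruction is the classical non-triviality of the Dedekind sum cocycle on $\SL_2(\Z)$.
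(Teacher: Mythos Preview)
Your treatment of (ii) and (iii) is essentially the paper's own argument. The two distribution identities you invoke for $\Ber_2$ are precisely the paper's Lemma~\ref{B_2.sum}, proved there via the Fourier expansion~\eqref{B_2.Fourier} and a character-order computation (Lemma~\ref{character.order.lemma}); the indicator term is handled by the elementary count in~\eqref{Char.sum.}. Your derivation of (iii) --- extracting $\alpha+\alpha^{-1}=a+d$ from the eigenvector condition, reading off $cx_1-ax_2\equiv -x_2\pmod\Z$, and using evenness of $\Ber_2$ --- matches the paper line for line.

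For (i) you take a slightly different but perfectly valid route: the paper reads the identity off from the integral representation of $\Ze$ (via~\eqref{Z.inco.}, \eqref{L2.Barnes.Eq.}, and the residue computation~\eqref{Z.at.0}), whereas you substitute directly into~\eqref{ZZ_0.defi.} and match against the classical closed form for $\zeta_2(z,\omega,0)$. Both arrive at the same expression for $\Ze_0$.

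There are two places where your sketch is weaker than the paper. First, the Bruhat argument for the cocycle relation is incomplete as stated: vanishing on the Borel together with the formula in (ii) does not by itself force $\ZZ_0(\gamma_1\gamma_2)=\gamma_1\cdot\ZZ_0(\gamma_2)+\ZZ_0(\gamma_1)$ for arbitrary pairs; one would still have to check relations in a presentation of $G$. The paper sidesteps this entirely by identifying $\ZZ_0$ with $h\circ\ZZ$ for the $G$-equivariant evaluation map $h:\D\to\D_0$, importing the cocycle property from \cite[Thm~1]{Es22} --- exactly your stated alternative. Second, for non-triviality the paper's argument is sharper than an appeal to the classical Dedekind-sum cocycle: under the hypotheses of (iii) with $\alpha>0$ one has $x\gamma^{-\Tr}\equiv x\pmod{\Z^2}$ and $\gamma^\Tr\omega=\alpha\omega$, so \emph{every} coboundary satisfies $\Xi(\gamma)(x,\omega)=\nu(x,\alpha\omega)-\nu(x,\omega)=0$ by degree-zero homogeneity; a single explicit evaluation (the paper uses $\ZZ_0(\gamma)(x,\omega)=-9/20$ for a concrete hyperbolic $\gamma$) then finishes. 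Your outline gestures at this mechanism but does not isolate the key vanishing of coboundaries on eigenvector data.
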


By using Shintani's method it can be shown that the above theorem gives $\zeta_\gid(\cid,0)=\ZZ_0(\gamma)(x,\omega)$ for certain choices of $\gamma$ and $(x,\omega)$ depending on $\gid$ and $\cid$, where $\gamma$ and $(x,\omega)$ satisfy the hypothesis of Theorem \ref{ZZ_0.thm}\eqref{ZZ_0.thm.rati.}.

Now we state our generalization of Equation \eqref{Hayes.formula}. For any
\begin{align*}
\gamma=\begin{pmatrix}a&b\\c&d\end{pmatrix}\in\SL_2(\Z)
\end{align*}
define \textit{the content of $\gamma$} as the positive integer $\ell_\gamma:=\gcd(c,a-1)$. Also, if $c\not=0$, denote by $T_{\gamma}$ the triangle in $\R^2$ with vertices at $(0,0)$, $(a-1,c)$, and $(-1,0)$. Then for any $x=(x_1,x_2)\in\Q^2$ with $x_1\notin\Z$ define $T_{\gamma,x}$ as the dilated triangle $\{x_1\}T_\gamma$.

\begin{theorem}\label{Hayes.Espinoza.thm.}
With the notation of the previous paragraph, let $\gamma\in\SL_2(\Z)$ with $c>0$ and $\ell_\gamma>1$, let $x=(x_1,0)\in\Q^2$ with $x_1\in\ell_\gamma^{-1}\Z\smallsetminus\Z$, and let $\omega\in\Mirr_{2\times1}(\R)$. Then
\begin{align}\label{Hayes.Espinoza.formula}
\ZZ_0(\gamma)(x,\omega)-\ZZ_0(\gamma)(\ell_\gamma x,\ell_\gamma^{-1}\omega)=
\Big(\sum_{\lambda\in\Z^2}w_{\gamma,x}(\lambda)\Big)-\mathrm{area}(T_{\gamma,x})+\frac{\sgn(a+c\tau)-3}{4},
\end{align}
where $\tau:=\omega_2/\omega_1$, and $w_{\gamma,x}$ is the real-valued function on $\R^2$ with support $T_{\gamma,x}$ such that $w_{\gamma,x}$ equals $1$ on the interior of $T_{\gamma,x}$ and $1/2$ on its boundary.
\end{theorem}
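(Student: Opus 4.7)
The plan is to evaluate both sides of \eqref{Hayes.Espinoza.formula} directly. I would first simplify the LHS using the explicit formula \eqref{ZZ_0.defi.}. Since $\gamma \in \SL_2(\Z)$ forces $\gcd(a,c) = 1$; since $\ell_\gamma \mid c$ and $x_1 \in \ell_\gamma^{-1}\Z$ give $cx_1 \in \Z$; and since $x_2 = 0$ while $\tau = \omega_2/\omega_1$ is invariant under $\omega \mapsto \ell_\gamma^{-1}\omega$, both $\Ber_2(cx_1)$ and $\Ber_2(x_2)$ equal $1/6$, so the first two terms of \eqref{ZZ_0.defi.} agree for $\ZZ_0(\gamma)(x,\omega)$ and $\ZZ_0(\gamma)(\ell_\gamma x, \ell_\gamma^{-1}\omega)$. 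The indicator term of \eqref{ZZ_0.defi.} contributes only to $\ZZ_0(\gamma)(\ell_\gamma x, \ell_\gamma^{-1}\omega)$ (since $\ell_\gamma x \in \Z^2$ but $x \notin \Z^2$). Hence the LHS of \eqref{Hayes.Espinoza.formula} reduces to
\[
S(-a, c; x) - S(-a, c; \ell_\gamma x) + \frac{\sgn(a + c\tau)}{4},
\]
and \eqref{Hayes.Espinoza.formula} becomes the purely combinatorial identity
\[
S(-a, c; x) - S(-a, c; \ell_\gamma x) \;=\; \sum_{\lambda \in \Z^2} w_{\gamma,x}(\lambda) - \mathrm{area}(T_{\gamma,x}) - \frac{3}{4}. \qquad (\ast)
\]

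Next I would evaluate the weighted lattice sum by slicing $T_{\gamma,x}$ into horizontal strips. Writing $t := \{x_1\} \in (0,1)$, the triangle has vertices $(0,0)$, $(t(a-1), tc)$ (a lattice point, since $\ell_\gamma$ divides both $a-1$ and $c$ and $\ell_\gamma x_1 \in \Z$), and $(-t, 0)$; note $tc \in \Z$ because $\ell_\gamma \mid c$. A direct parameterization of the three edges shows uniformly (regardless of the sign of $a$) that the slice at any integer height $q \in [0, tc]$ is the interval $[L(q), R(q)]$ with $L(q) = aq/c - t$, $R(q) = (a-1)q/c$, and $R(q) - L(q) = t - q/c \ge 0$. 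A short case analysis on whether $L(q), R(q)$ are integers yields the Euler--Maclaurin-type identity
\[
(\text{weighted count of }\Z \cap [L(q), R(q)]) \;=\; t - \frac{q}{c} + \Ber_1(aq/c - t) - \Ber_1((a-1)q/c),
\]
the only exception being the degenerate slice at $q = tc$, where $L = R = t(a-1) \in \Z$ contributes an extra $1/2$ from the vertex $(t(a-1), tc)$. Summing over $q \in \{0, 1, \dots, tc\}$ gives
\[
\sum_{\lambda} w_{\gamma,x}(\lambda) - \mathrm{area}(T_{\gamma,x}) \;=\; 1 - \frac{t}{2} + \sum_{q=1}^{tc-1} \bigl[\Ber_1(aq/c - t) - \Ber_1((a-1)q/c)\bigr].
\]

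Finally I would match the right-hand side above against $S(-a, c; x) - S(-a, c; \ell_\gamma x) + 3/4$. Expanding the Dedekind--Rademacher sums using \eqref{Dede-Rade.sum} with $x_2 = 0$, and invoking the identities $\Ber_1(-y) = -\Ber_1(y)$ and $\Ber_1(m/c) = m/c - 1/2$ for $1 \le m \le c-1$, the claim $(\ast)$ becomes an equality between two finite sums of periodic Bernoulli values on $(1/c)\Z/\Z$. Using that multiplication by $a$ is a bijection on $\Z/c\Z$ (since $\gcd(a,c) = 1$), a reindexing $q \mapsto aq \bmod c$ repackages the slice sum in a form compatible with the Dedekind--Rademacher expansion. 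The hardest part will be this reindexing: the slice sum ranges over $\{1, \dots, tc-1\}$, a proper subset of $\{1, \dots, c-1\}$, and the Bernoulli values $\Ber_1((a-1)q/c)$ carry the multiplier $a-1$ rather than $a$, so the two subsums must be independently identified. Carefully tracking the rational constants arising from the degenerate slice at $q = tc$ and from $\Ber_1$'s jump discontinuities at integers is where the bookkeeping lies.
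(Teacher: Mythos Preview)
Your reduction of the left-hand side matches the paper's opening computation exactly. Your lattice count on the right-hand side, however, takes a genuinely different route: you slice $T_{\gamma,x}$ horizontally and apply an Euler--Maclaurin identity on each row, obtaining the (correct) expression $1-\tfrac{t}{2}+\sum_{q=1}^{tc-1}\bigl[\Ber_1(aq/c-t)-\Ber_1((a-1)q/c)\bigr]$. The paper instead applies the unimodular change of coordinates $(m,n)=(ar+bt',\,cr+dt')$ coming from $\gamma$ itself; under it the interior count becomes $\sum_{m=1}^{c\{x_1\}-1}\bigl(\lfloor md/c\rfloor-\lfloor m(d-1)/c\rfloor\bigr)$, which---because $ad\equiv1\pmod c$ and $\ell_\gamma\mid d-1$---matches the Dedekind--Rademacher difference directly once the latter is rewritten via the set $A=\{m:\{x_1\}+\{-ma/c\}\ge1\}$ (Equation~\eqref{Eq.2.1} and Lemma~\ref{Lemma.2.1}). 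The payoff of the paper's route is that both sides end up indexed by the \emph{same} interval $\{1,\dots,c\{x_1\}\}$ and expressed through $d$, so the final comparison is a one-line identity.

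This is also where your plan is thinnest. The bijection $q\mapsto aq\bmod c$ does not preserve the subrange $\{1,\dots,tc-1\}$, and the term $\Ber_1((a-1)q/c)$ lives on the coarser modulus $c/\ell_\gamma$, so ``reindexing'' alone will not close the gap. What actually makes the matching work on the Dedekind--Rademacher side is the pointwise identity $\Ber_1(x_1-ma/c)-\Ber_1(-ma/c)=\{x_1\}-\ind_A(m)$ (away from the single class $m\equiv cdx_1$), after which the substitution $m\mapsto -am$ carries $A$ onto an interval of length $c\{x_1\}$ in $\Z/c\Z$. Your slicing already produces that same interval on the geometric side, so the two approaches do converge; but the ``careful bookkeeping'' you flag will in effect force you to rediscover this set-$A$ argument, and the saving from the elementary horizontal slicing is largely spent in the final matching.
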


Hayes' formula \eqref{Hayes.formula} follows essentially from the above theorem by using Shintani's method except for an evident small change. Indeed, we have chosen to present the right-hand side of \eqref{Hayes.Espinoza.formula} in the canonical basis of $\R^2$ because it is then obvious the way in which $\omega$ enters the equation. We remark that Proposition~\ref{Ehrhart.Espinoza.Prop} below extends Equation~\eqref{Hayes.Espinoza.formula} by using the Ehrhart coefficient $\G_0$.

Now we state our last theorem assuming the notation given before Theorem \ref{Hayes.Espinoza.thm.}.

\begin{theorem}\label{Ehrhart.Espinoza.thm}
Let
\begin{align*}
\gamma=\begin{pmatrix}a&b\\c&d\end{pmatrix}, \ \gamma'=\begin{pmatrix}a'&b'\\c'&d'\end{pmatrix} \ \in \ \SL_2(\Z).
\end{align*}
Set $\ell:=\gcd(\ell_{\gamma},\ell_{\gamma'})$. If $c$, $c'$, and $ca'+dc'$ are positive, then 
\begin{align}\label{Ehrhart.Espinoza.formula}
\G_0(\ell^{-1}T_{\gamma\gamma'}\,,\,m)\,=\,\G_0(\ell^{-1}T_\gamma\,,\,m)\,+\,\G_0(\ell^{-1}T_{\gamma'}\,,\,m)\,+\,\{m/\ell\}\,-\,1
\end{align}
for all $m\in\Z_{\geq0}$.
\end{theorem}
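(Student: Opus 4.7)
The plan is to derive \eqref{Ehrhart.Espinoza.formula} by combining the cocycle property of $\ZZ_0$ from Theorem~\ref{ZZ_0.thm} with the Ehrhart reinterpretation of \eqref{Hayes.Espinoza.formula} supplied by Proposition~\ref{Ehrhart.Espinoza.Prop}, which reexpresses each difference $\ZZ_0(\delta)(x,\omega)-\ZZ_0(\delta)(\ell x,\ell^{-1}\omega)$ as $\G_0(\ell^{-1}T_\delta,m)$ plus an explicit correction depending only on $m$, $\ell$ and the sign of the relevant denominator. First I would handle the case $\ell\mid m$: since $\G_0(\ell^{-1}T_\delta,0)=\G(\ell^{-1}T_\delta,0)=1$ for every $\delta\in\{\gamma,\gamma',\gamma\gamma'\}$ and $\{m/\ell\}=0$, equation \eqref{Ehrhart.Espinoza.formula} reduces to $1=1+1-1$. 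Assuming therefore $\ell\nmid m$, I set $x:=(m/\ell,0)$ and choose $\omega\in\W$ with $\tau:=\omega_2/\omega_1$ so large and positive that $a+c\tau$, $a'+c'\tau'$ (with $\tau'=(b+d\tau)/(a+c\tau)$ the parameter at $\gamma^\Tr\omega$), and $A+C\tau$ (where $(A,C)$ is the bottom row of $\gamma\gamma'$) are all positive; the hypothesis $c,c',ca'+dc'>0$ makes this possible. A direct check using $a\equiv a'\equiv 1\pmod{\ell}$ and $c\equiv c'\equiv 0\pmod{\ell}$ yields $\ell\mid\ell_{\gamma\gamma'}$, so Proposition~\ref{Ehrhart.Espinoza.Prop} applies uniformly with dilation factor $\ell$ to all three matrices.

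Next I apply the cocycle identity $\ZZ_0(\gamma\gamma')=\ZZ_0(\gamma)+\gamma\cdot\ZZ_0(\gamma')$ at both $(x,\omega)$ and $(\ell x,\ell^{-1}\omega)$; since $\det\gamma=1$ the action simplifies to $(\gamma\cdot\nu)(y,\eta)=\nu(y\gamma^{-\Tr},\gamma^\Tr\eta)$. A short calculation gives $x\gamma^{-\Tr}=(md/\ell,-mc/\ell)$, and the congruences $\ell\mid c$ together with $d\equiv 1\pmod{\ell}$ (forced by $ad-bc=1$ and $a\equiv 1\pmod{\ell}$) yield $x\gamma^{-\Tr}\equiv x\pmod{\Z^2}$; similarly $\ell x\gamma^{-\Tr}\in\Z^2$. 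The $\Z^2$-periodicity of $\ZZ_0$, transparent from the Bernoulli functions appearing in \eqref{ZZ_0.defi.}, then lets me rewrite the shifted evaluations of $\ZZ_0(\gamma')$ as evaluations at $(x,\gamma^\Tr\omega)$ and $(\ell x,\ell^{-1}\gamma^\Tr\omega)$. Subtracting the $(\ell x,\ell^{-1}\omega)$-version of the cocycle from the $(x,\omega)$-version then produces an identity among three differences of exactly the form treated by Proposition~\ref{Ehrhart.Espinoza.Prop}; because my choice of $\omega$ makes the three sign factors all equal $+1$, the correction terms collapse to a common value, and collecting them yields precisely the summand $\{m/\ell\}-1$.

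The delicate point is Proposition~\ref{Ehrhart.Espinoza.Prop} itself. Theorem~\ref{Hayes.Espinoza.thm.} combined with Ehrhart--Macdonald reciprocity $\G^\circ(T,m)=\G(T,-m)$ only yields the symmetric combination $\sum_\lambda w_{\gamma,x}(\lambda)-\mathrm{area}(T_{\gamma,x})=\tfrac{1}{2}(\G_0(\ell^{-1}T_\gamma,m)+\G_0(\ell^{-1}T_\gamma,-m))$, so in order to isolate $\G_0(\ell^{-1}T_\gamma,m)$ one must control the boundary lattice-point contribution $\G(\ell^{-1}T_\gamma,m)-\G^\circ(\ell^{-1}T_\gamma,m)$ separately. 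The $\{m/\ell\}$-defect emerges here from the unique non-integral vertex $(-1/\ell,0)$ of $\ell^{-1}T_\gamma$; it is this combinatorial input that makes \eqref{Ehrhart.Espinoza.formula} nonformal, while everything beyond it in the reduction above is essentially bookkeeping driven by the cocycle machinery.
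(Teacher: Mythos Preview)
Your argument is correct and matches the paper's: apply the cocycle relation at $(x,\omega)$ and at the integral point, use $\ell\mid c$ and $d\equiv1\pmod\ell$ to get the $\Z^2$-periodicity $x\gamma^{-\Tr}\equiv x$, choose $\tau\gg0$ so that all three sign factors equal $+1$, and invoke Proposition~\ref{Ehrhart.Espinoza.Prop} for each of $\gamma,\gamma',\gamma\gamma'$. The only cosmetic difference is that the paper applies the proposition with the native contents $\ell_\gamma,\ell_{\gamma'},\ell_{\gamma\gamma'}$ and then uses the degree-zero homogeneity $\G_0(k\Po,t)=\G_0(\Po,kt)$ at the very end to pass to the common $\ell$, a step you have silently folded into the phrase ``applies uniformly with dilation factor $\ell$''; your closing paragraph on Ehrhart--Macdonald reciprocity is commentary on how one might reprove the proposition and is not needed for Theorem~\ref{Ehrhart.Espinoza.thm}.
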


\section{Proof of Theorem \ref{ZZ_0.thm}}\label{ZZ_0.sect.}
The proof of Theorem \ref{ZZ_0.thm} relies upon two main ideas. Firstly, we consider the Barnes-Hurwitz zeta cocycle $\ZZ$ introduced in \cite[Thm 1]{Es22}, and we remark that its value $\ZZ_0$ at $s=0$ preserves the cocycle property. Secondly, in order to obtain the explicit formula \eqref{ZZ_0.defi.} for $\ZZ_0$, we use the Fourier expansion of the periodic Bernoulli function $\Ber_2$ defined in \eqref{Bernoulli.defi}.

We first summarize the construction of $\ZZ$ for the sake of completeness. For any $\epsilon>0$, let $C(\epsilon)$ be the Hankel contour in the complex plane, \ie the counterclockwise oriented path consisting of the interval $[\epsilon,+\infty)$, and the circle of radius $\epsilon$ centered at the origin followed by the same interval. We define the $\Ci$-valued function $\Zet$ on the disjoint union 
\begin{align*}
(\Q^2\smallsetminus\Z^2)\times\mathrm{M}_{2\times1}(\R^\times)\times\C \quad \bigcup \quad \Z^2\times\mathrm{M}_{2\times1}(\R_{>0})\times\C
\end{align*}
by the rule
\begin{align}\label{Z.inco.}
&\Zet(x,\omega,s):=\mathcal{L}_2(x,\omega,s)+\frac{1}{2}\mathbf{1}_\Z(x_1)\mathcal{L}_1(x_2,\omega_2,s)+\frac{1}{2}\mathbf{1}_\Z(x_2)\mathcal{L}_1(x_1,\omega_1,s),
\end{align}
where we write $x=(x_1,x_2)$, $\omega=(\omega_1,\omega_2)^\Tr$,
\begin{align}
&\label{L2}\mathcal{L}_2(x,\omega,s):=\frac{1}{\Gamma(s)(\e(s)-1)}\int_{C(\epsilon)}\frac{\e^{-u(\langle x_1\rangle\omega_1+\langle x_2\rangle\omega_2)}}{(1-\e^{-u\omega_1})(1-\e^{-u\omega_2})}u^{s-1}du, \quad \text{and}\\
&\label{L1}\mathcal{L}_1(x_j,\omega_j,s):=\frac{1}{\Gamma(s)(\e(s)-1)}\int_{C(\epsilon)}\frac{\e^{-u\langle x_j\rangle\omega_j}}{1-\e^{-u\omega_j}}u^{s-1}du\qquad (j=1,2).
\end{align}
Here, $\Ci$ denotes the extended complex plane, $\e(s):=\e^{2\pi is}$ for all $s\in\C$, $\epsilon=\epsilon(\omega)>0$ is such that $u=0$ is the only zero of $(1-\e^{-u\omega_1})(1-\e^{-u\omega_2})$ in the closed disk of radius $\epsilon$ centered at the origin, and $u^{s-1}$ either considers $\arg(u)=0$ if $u$ lies in the negatively oriented interval $[\epsilon,+\infty)$ or $0<\arg(u)\leq 2\pi$ otherwise. The functions $\mathcal{L}_2$ and $\mathcal{L}_1$ converge and are independent of the choice of the collection $\{\epsilon(\omega)\}_\omega$. Thus they define meromorphic functions of $s$ on $\C$ for each suitable $(x,\omega)$, and one shows that
\begin{align}\label{L2.Barnes.Eq.}
\mathcal{L}_2(x,\omega,s)=\zeta_2\big(z(x,\omega),\omega,s\big), \qquad z(x,\omega):=\langle x_1\rangle\omega_1+\langle x_2\rangle\omega_2,
\end{align}
for all $(x,\omega,s)\in \Q^2\times\mathrm{M}_{2\times1}(\R_{>0})\times\C$, whereas \begin{align*}
\mathcal{L}_1(x_j,\omega_j,s)=\omega_j^{-s}\zeta(\langle x_j\rangle,s), \qquad j\in\{1,2\},
\end{align*}
for all $(x_j,\omega_j,s)\in\Q\times\R_{>0}\times\C$. Since the integrals in Equations \eqref{L2} and \eqref{L1} do not simultaneously converge for all $s\in\C$ when we assume both $x\in\Z^2$ and $\omega\notin\Mat(\R_{>0})$, we need to somehow extend the definition of $\Zet$. For this reason, we define the $\Ci$-valued function $\Ze$ on $\Q^2\times\mathrm{M}_{2\times1}(\R^\times)\times\C$ by
\begin{align*}
\Ze(x,\omega,s):=\det(B_\omega)\Zet(xB_\omega,B_\omega\omega,s),
\end{align*}
where $B_\omega$ is the diagonal matrix $\diag(\sgn(\omega_1),\sgn(\omega_2))$ for $\omega=(\omega_1,\omega_2)^\Tr$. Then \cite[Prop. 18]{Es22} shows that $\Ze:\Q^2/\Z^2\to\A$ is an \textit{$s$-distribution} \cite[Def. 4]{Es22}, where $\A$ is the collection of certain maps of the form $\Mirr_{2\times1}(\R)\to\Mer$ and $\Mer$ is the set of $\Ci$-valued meromorphic functions on $\C$ holomorphic at $s=0$. In the sequel, all we need to know about these $s$-distributions is that they form a $G$-module $\D$, and that ``evaluation at $s=0$'' induces a $G$-module homomorphism $h:\D\to\D_0$ thanks to \cite[Prop. 9]{Es22}. The $G$-action `$\cdot$' on $\D$ (by abuse of notation) is given by the identity
\begin{align*}
(\gamma\cdot \nu)(x,\omega,s)=\sgn(\det\gamma)\sum_{\mu\in\Z^2/\Z^2\gamma^\Tr}\nu\big((x+\mu)\gamma^{-\Tr},\gamma^\Tr\omega,s\big)
\end{align*}
in $\Ci$, valid for all $\gamma\in\M$, $\nu\in \D$, and $(x,\omega,s)\in\Q^2/\Z^2\times\W\times \C$. Hence the inhomogeneous $1$-cocycle $\ZZ:G\to\D$ is defined by 
\begin{align}\label{ZZ.defi.}
\mathfrak{Z}\!\begin{pmatrix}
a&b\\c&d
\end{pmatrix}=\begin{cases}\begin{pmatrix}1&a\\0&c\end{pmatrix}\cdot\Ze & \text{if $c\not=0$,}\\ 0 & \text{if $c=0$.} \end{cases}
\end{align}
Therefore the composition $h\circ \ZZ$ is an inhomogeneous $1$-cocycle of $G$ with values in $\D_0$. We would like to prove that $\ZZ_0=h\circ \ZZ$ (see \eqref{ZZ_0.defi.}).

The residue theorem allows us to compute $\Ze_0(x,\omega):=\Ze(x,\omega,0)$. Indeed, 
\begin{align}\label{Z.at.0}
\Ze_0(x,\omega)=\frac{1}{2}\frac{\omega_1}{\omega_2}\Ber_2(x_1)+\frac{1}{2}\frac{\omega_2}{\omega_1}\Ber_2(x_2)+\Ber_1(x_1)\Ber_1(x_2)-\frac{1}{4}\sgn\Big(\frac{\omega_2}{\omega_1}\Big)\ind_\Z(x_1)\ind_\Z(x_2)
\end{align}
for all $(x,\omega)\in\Q^2\times\mathrm{M}_{2\times1}(\R^\times)$. So in order to get $\ZZ_0$ explicitly, we must first learn to sum certain values of $\Ber_2$. For this purpose, we will employ the absolutely convergent Fourier expansion 
\begin{align}\label{B_2.Fourier}
\Ber_2(y)=\frac{1}{2\pi^2}\sum_{m\not=0}\frac{\e(my)}{m^2} \qquad\text{(valid for all $y\in\R$)}
\end{align}
together with the following lemma. We recall that $\e(z):=\e^{2\pi iz}$ for all $z\in\C$.

\begin{lemma}\label{character.order.lemma}
Let $\gamma\in\M$. For any $j\in\{1,2\}$ consider the group homomorphism $\chi_j:\Z^2/\Z^2\gamma^\Tr\longrightarrow\C^\times$ induced by
\begin{align}\label{character.def.}
\chi_j(\mu)=\e(\langle \mu, \gamma_{j*}^{-1}\rangle)
\qquad\text{for all $\mu\in\Z^2$,}
\end{align}
where $\gamma_{j*}^{-1}$ denotes the $j$th row of $\gamma^{-1}$ and $\langle *, *\rangle$ denotes the usual dot product on $\R^2$. Then if $$\gamma=\begin{pmatrix}a&b\\c&d\end{pmatrix},$$ the order of $\chi_j$ in the character group of $\Z^2/\Z^2\gamma^\Tr$ is 
\begin{align*}
\frac{\det \gamma}{\gcd(b,d)} \quad \text{if $j=1$} \qquad\text{and}\qquad \frac{\det \gamma}{\gcd(a,c)} \quad \text{if $j=2$}.
\end{align*}
\end{lemma}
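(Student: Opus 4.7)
The plan is to translate ``order of $\chi_j$'' into a concrete integrality condition on the row vector $\gamma_{j*}^{-1}$, and then read off the answer from the explicit formula for the inverse of a $2\times 2$ matrix.

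First, I would confirm that $\chi_j$ descends to the quotient. For any $\nu \in \Z^2$, a direct index manipulation using $\gamma^{-1}\gamma = I_2$ gives
\begin{align*}
\langle \nu\gamma^\Tr, \gamma_{j*}^{-1}\rangle \,=\, \sum_i \nu_i \sum_k (\gamma^{-1})_{jk}\gamma_{ki} \,=\, \sum_i \nu_i \delta_{ji} \,=\, \nu_j \,\in\, \Z,
\end{align*}
so $\chi_j(\nu\gamma^\Tr) = \e(\nu_j) = 1$. Hence $\chi_j$ is a well-defined character of $\Z^2/\Z^2\gamma^\Tr$, and its order is the smallest positive integer $n$ for which $\e(n\langle \mu, \gamma_{j*}^{-1}\rangle) = 1$ for every $\mu \in \Z^2$, i.e.\ for which $n\gamma_{j*}^{-1} \in \Z^2$.

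Second, I would substitute the explicit formula
\begin{align*}
\gamma^{-1} \,=\, \frac{1}{\det\gamma}\begin{pmatrix} d & -b \\ -c & a \end{pmatrix}.
\end{align*}
For $j=1$, the integrality of $n\gamma_{1*}^{-1}$ is equivalent to the pair of divisibilities $\det\gamma \mid nd$ and $\det\gamma \mid nb$. A short Bezout argument shows these two conditions together are equivalent to the single condition $\det\gamma \mid n\gcd(b,d)$: one direction is immediate since $\gcd(b,d)$ divides both $b$ and $d$, and the other uses the identity $\gcd(b,d) = \alpha b + \beta d$ for some $\alpha,\beta \in \Z$ to write $n\gcd(b,d) = \alpha(nb) + \beta(nd)$.

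Finally, I would invoke the elementary fact $\gcd(b,d) \mid ad - bc = \det\gamma$, itself immediate from $\gcd(b,d) \mid b$ and $\gcd(b,d) \mid d$. This forces the smallest positive $n$ with $\det\gamma \mid n\gcd(b,d)$ to be exactly $\det\gamma/\gcd(b,d)$, proving the $j=1$ case. The case $j=2$ is strictly analogous, with $(b,d)$ replaced by $(a,c)$ and the observation $\gcd(a,c)\mid \det\gamma$ playing the same role. No serious obstacle is anticipated; the only mildly subtle point is the reduction of two simultaneous divisibility conditions to a single $\gcd$-divisibility via Bezout, together with the observation that this $\gcd$ automatically divides $\det\gamma$, which is precisely what makes the answer collapse to the clean formula claimed.
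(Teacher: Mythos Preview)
Your argument is correct and reaches the same conclusion as the paper, but by a shorter route. Both proofs start from the equivalence ``$\chi_j^n=1$ iff $\det\gamma\mid nb$ and $\det\gamma\mid nd$'' (for $j=1$). The paper then rewrites each condition as a divisor of $n$, takes the $\lcm$, and invokes the identity
\[
\frac{|x|\gcd(x,y,z)}{\gcd(x,y)\gcd(x,z)}=\gcd\!\Big(\frac{x}{\gcd(x,z)},\frac{x}{\gcd(x,y)}\Big)
\]
together with $\lcm\cdot\gcd=|\,\cdot\,|$ to collapse the expression, finally noting $\gcd(b,d,\det\gamma)=\gcd(b,d)$. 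You instead fuse the two divisibilities in one step via B\'ezout into $\det\gamma\mid n\gcd(b,d)$, and then the observation $\gcd(b,d)\mid\det\gamma$ gives the order directly. Your path avoids the $\lcm$ detour and the auxiliary gcd identity; the paper's version has the minor advantage of making explicit that the general answer is $|\det\gamma|/\gcd(b,d,\det\gamma)$ before specializing. One cosmetic remark: strictly speaking the smallest positive $n$ is $|\det\gamma|/\gcd(b,d)$, matching the absolute value that appears midway through the paper's computation; the statement of the lemma itself writes $\det\gamma/\gcd(b,d)$, so you are consistent with it.
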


\begin{proof}
Let $m$ be a nonzero integer. Note that $\chi_1^m=1$ if and only if 
\begin{align*}
m\langle \mu, \gamma_{1*}^{-1}\rangle=(\det \gamma)^{-1}(\mu_1md-\mu_2mb) \ \in \ \Z \qquad\text{for all $\mu\in\Z^2$.}
\end{align*}
Then $\chi_1^m=1$ if and only if $md$ and $mb$ are multiples of $\det \gamma$, which is equivalent to the fact that both $(\det \gamma)/\gcd(b,\det \gamma)$ and $(\det \gamma)/\gcd(d,\det \gamma)$ divide $m$. Similarly we obtain that $\chi_2^m=1$ if and only if both $(\det \gamma)/\gcd(a,\det \gamma)$ and $(\det \gamma)/\gcd(c,\det \gamma)$ divide $m$. Therefore
\begin{align}\label{character.order.cond.}
\chi_j^m=1 \quad\Longleftrightarrow\quad m\in\begin{cases}\lcm\Big(\frac{\det \gamma}{\gcd(b,\det \gamma)},\frac{\det \gamma}{\gcd(d,\det \gamma)}\Big)\,\Z & \text{if $j=1$,}\\ \lcm\Big(\frac{\det \gamma}{\gcd(a,\det \gamma)},\frac{\det \gamma}{\gcd(c,\det \gamma)}\Big)\,\Z & \text{if $j=2$.}\end{cases}
\end{align}
On the other hand, using elementary properties of the greatest common divisor one shows that 
\begin{align*}
\frac{|x|\gcd(x,y,z)}{\gcd(x,y)\gcd(x,z)}=\gcd\!\Big(\frac{x}{\gcd(x,z)},\frac{x}{\gcd(x,y)}\Big) \qquad\text{for all $x, y, z\in\Z$ with $x\not=0$.}
\end{align*}
The last formula together with the relation $\lcm(x,y)\gcd(x,y)=|xy|$ allows us to rewrite \eqref{character.order.cond.} as
\begin{align*}
\chi_j^m=1 \quad\Longleftrightarrow\quad m\in\begin{cases}\frac{|\det \gamma|}{\gcd(b,d,\det \gamma)}\,\Z & \text{if $j=1$,}\\ \frac{|\det \gamma|}{\gcd(a,c,\det \gamma)}\,\Z & \text{if $j=2$.}\end{cases}
\end{align*}
Finally the lemma follows from the remark that $\gcd(b,d,\det \gamma)=\gcd(b,d)$ and $\gcd(a,c,\det \gamma)=\gcd(a,c)$.
\end{proof}

\begin{lemma}\label{B_2.sum}
With the notation of the previous lemma, let $$\gamma=\begin{pmatrix}
a&b\\c&d
\end{pmatrix}\ \in \ \M$$ and let $R_\gamma$ be a complete set of coset representatives for $\Z^2$ modulo $\Z^2\gamma^\Tr$. Then, for any $j\in\{1,2\}$ and any $x\in\R^2$ we have that
\begin{align*}
\sum_{\mu\in R_\gamma}\Ber_2\big(\langle x+\mu,\gamma_{j*}^{-1}\rangle\big)=\begin{cases}\frac{\gcd(b,d)^2}{|\det \gamma|}\Ber_2\!\Big(\frac{\det \gamma}{\gcd(b,d)}\langle x,\gamma_{1*}^{-1}\rangle\Big) \quad\text{if $j=1$,}\\
\frac{\gcd(a,c)^2}{|\det \gamma|}\Ber_2\!\Big(\frac{\det \gamma}{\gcd(a,c)}\langle x,\gamma_{2*}^{-1}\rangle\Big)\quad\text{if $j=2$.}\end{cases}
\end{align*}
In particular, the above sum is independent of the choice of $R_\gamma$ and it remains invariant under the substitution $\gamma\mapsto\lambda \gamma$ for any nonzero integer $\lambda$.
\end{lemma}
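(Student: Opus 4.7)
The plan is to exploit the absolutely convergent Fourier expansion \eqref{B_2.Fourier} to turn the sum on the left into a character sum that can be evaluated by orthogonality and identified via Lemma \ref{character.order.lemma}. I will treat $j=1$ in detail; the case $j=2$ is strictly symmetric, with $(b,d)$ replaced by $(a,c)$.

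First I would substitute \eqref{B_2.Fourier} into the sum and interchange the (finite) sum over $R_\gamma$ with the absolutely convergent Fourier series to rewrite
\begin{align*}
\sum_{\mu\in R_\gamma}\Ber_2\bigl(\langle x+\mu,\gamma_{1*}^{-1}\rangle\bigr)=\frac{1}{2\pi^2}\sum_{m\not=0}\frac{\e(m\langle x,\gamma_{1*}^{-1}\rangle)}{m^2}\sum_{\mu\in R_\gamma}\chi_1(\mu)^m,
\end{align*}
where $\chi_1$ is the character of \eqref{character.def.}. Next I would evaluate the inner sum by character orthogonality on the finite abelian group $\Z^2/\Z^2\gamma^\Tr$: it equals $|\det\gamma|$ when $\chi_1^m$ is trivial and vanishes otherwise. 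By Lemma \ref{character.order.lemma}, triviality occurs precisely when $N:=|\det\gamma|/\gcd(b,d)$ divides $m$. Substituting $m=kN$ and pulling $N$ out of both the exponential and the denominator recovers the Fourier expansion once more, yielding
\begin{align*}
\frac{|\det\gamma|}{N^2}\,\Ber_2\bigl(N\langle x,\gamma_{1*}^{-1}\rangle\bigr)=\frac{\gcd(b,d)^2}{|\det\gamma|}\,\Ber_2\bigl(N\langle x,\gamma_{1*}^{-1}\rangle\bigr).
\end{align*}
Since $\Ber_2$ is even (a direct check from \eqref{Bernoulli.defi}), replacing $N$ by $\det\gamma/\gcd(b,d)$ does not change the value, and this matches the stated formula.

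For the ``in particular'' clauses, independence of the choice of $R_\gamma$ will follow from the identity $\langle\nu\gamma^\Tr,\gamma_{1*}^{-1}\rangle=\nu_1\in\Z$ for every $\nu\in\Z^2$, combined with the $1$-periodicity of $\Ber_2$. Invariance under $\gamma\mapsto\lambda\gamma$ is a direct scaling check on the right-hand side: the prefactor $\gcd(b,d)^2/|\det\gamma|$ is homogeneous of degree $0$ in $\lambda$, and the argument of $\Ber_2$ acquires only a factor of $\sgn(\lambda)$, again absorbed by evenness.

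I do not anticipate any serious obstacle; the only subtlety is reconciling the unsigned $|\det\gamma|$ appearing in the character-theoretic computation with the signed $\det\gamma$ in the statement, which is handled cleanly by the evenness of $\Ber_2$.
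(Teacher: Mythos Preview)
Your proposal is correct and follows essentially the same route as the paper: substitute the Fourier expansion \eqref{B_2.Fourier}, swap sums, apply Schur orthogonality together with Lemma~\ref{character.order.lemma}, and resum. You have in fact been more careful than the paper on two points it leaves implicit --- the sign reconciliation via the evenness of $\Ber_2$, and the verification of the ``in particular'' clauses --- so nothing needs to be added.
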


\begin{proof}
We use \eqref{B_2.Fourier} and \eqref{character.def.} to show that
\begin{align*}
\sum_{\mu\in R_\gamma}\Ber_2\big(\langle x+\mu,\gamma_{1*}^{-1}\rangle\big)=\frac{1}{2\pi^2}\sum_{m\not=0}\frac{\e\big(m\langle x,\gamma_{1*}^{-1}\rangle\big)}{m^2}\sum_{\mu\in\Z^2/\Z^2\gamma^\Tr}\chi_1^m(\mu).
\end{align*}
Then we use Schur orthogonality relations for $\chi_1$, Lemma~\ref{character.order.lemma}, and \eqref{B_2.Fourier} to show that the right-hand side of the last equation equals 
\begin{align*}
\frac{\gcd(b,d)^2}{|\det \gamma|}\,\Ber_2\!\Big(\frac{\det \gamma}{\gcd(b,d)}\langle x,\gamma_{1*}^{-1}\rangle\Big).
\end{align*}
A similar reasoning yields the desired formula when $j=2$.
\end{proof}

To treat the more delicate addition concerning $\Ber_1$ we use the Dedekind-Rademacher sum \eqref{Dede-Rade.sum}. So it only remains to deal with certain sums of values of the form 
\begin{align*}
C(x,\omega):=\frac{1}{4}\sgn(\omega_2/\omega_1)\ind_\Z(x_1)\ind_\Z(x_2) \qquad\qquad\big(x\in\Q^2, \,\omega\in\mathrm{M}_{2\times1}(\R^\times)\big)
\end{align*} 
as in \eqref{Z.at.0}. In fact, we would like to compute
\begin{align*}
\sgn(c)\sum_{\mu\in\Z^2/\Z^2\sigma^\Tr}C\big((x+\mu)\sigma^{-\Tr},\sigma^\Tr\omega\big) \qquad\qquad\text{for $\sigma=\begin{pmatrix}
1&a\\0&c
\end{pmatrix}$ with $c\not=0$,}
\end{align*}
which is actually
\begin{align*}
\frac{\sgn(c(a+c\tau))}{4}\sum_{\mu\in\Z^2/\Z^2\sigma^\Tr}\ind_\Z\Big((x_1+\mu_1)-(x_2+\mu_2)\frac{a}{c}\Big)\,\ind_\Z\Big((x_2+\mu_2)\frac{1}{c}\Big),
\end{align*}
where $\tau:=\omega_2/\omega_1$. Note that each addend equals zero unless both arguments of the indicator function $\ind_\Z$ are integers. An elementary analysis shows that this occurs if and only if $x\in\Z^2$ and $\mu_2\equiv-x_2\mod c$. Furthermore, for a fixed $x\in\Z^2$, there exists a unique pair $\mu\in\Z^2$, modulo $\Z^2\sigma^\Tr$, such that $\mu_2\equiv-x_2\mod c$. Then
\begin{align}\label{Char.sum.}
\sgn(c)\sum_{\mu\in\Z^2/\Z^2\sigma^\Tr}C\big((x+\mu)\sigma^{-\Tr},\sigma^\Tr\omega\big)=\frac{\sgn(c(a+c\tau))}{4}\ind_\Z(x_1)\ind_\Z(x_2).
\end{align}
Now we address the proof of Theorem~\ref{ZZ_0.thm}.

\begin{proof}[Proof of Theorem~\ref{ZZ_0.thm}]
For $c\not=0$ let $$\gamma=\begin{pmatrix}
a&b\\c&d
\end{pmatrix} \qquad\text{and}\qquad\sigma=\begin{pmatrix} 
1&a\\0&c
\end{pmatrix}\qquad\text{in $\M$}.$$ 
Using Equations \eqref{ZZ.defi.}, \eqref{Z.at.0}, and \eqref{Char.sum.}, one shows that
\begin{align*}
&(h\circ\ZZ)(\gamma)(x,\omega)=\frac{\sgn(c)}{2}\sum_{\mu\in\Z^2/\Z^2\sigma^\Tr}\Big(\frac{\omega_1}{a\omega_1+c\omega_2}\Ber_2(\langle x+\mu,\sigma^{-1}_{1*}\rangle)\\&+\frac{a\omega_1+c\omega_2}{\omega_1}\Ber_2(\langle x+\mu,\sigma^{-1}_{2*}\rangle)
+2\Ber_1\!\Big(x_1-(x_2+\mu_2)\frac{a}{c}\Big)\Ber_1\!\Big((x_2+\mu_2)\frac{1}{c}\Big)
\Big)\\
&-\frac{\sgn(c(a+c\tau))}{4}\ind_\Z(x_1)\ind_\Z(x_2).
\end{align*}
Note that the addends concerning $\Ber_1$ do not depend on $\mu_1$, so their sum over $\mu\in\Z^2/\Z^2\sigma^\Tr$ is $2S(-a,c;x)$. Then we deduce $\ZZ_0=h\circ\ZZ$ from Lemma~\ref{B_2.sum}  by a direct computation.

Now we prove part (\ref{ZZ_0.thm.rati.}). It is clear that in this case we have $\gcd(a,c)=1$. On the other hand, conditions $\alpha^{-1}v\equiv v\mod\Lambda$ and $\gamma^{\Tr}\omega=\alpha\omega$ imply that $(x\gamma^{-\Tr})\omega\equiv x\omega\mod\Lambda$, from which we get $cx_1-ax_2\equiv -x_2\mod \Z$. Hence by the expression \eqref{ZZ_0.defi.} of $\ZZ_0(\gamma)(x,\omega)$ we obtain
\begin{align*}
\ZZ_0(\gamma)(x,\omega)=\frac{\alpha^{-1}}{2c}\Ber_2(-x_2)+\frac{\alpha}{2c}\Ber_2(x_2)+\sgn(c)S(-a,c;x)\\-\frac{\sgn(c\alpha)}{4}\ind_\Z(x_1)\ind_\Z(x_2).
\end{align*}
It is easy to see that $\Ber_2(-x_2)=\Ber_2(x_2)$, and that $\alpha^{-1}$ is the Galois conjugate of $\alpha$. Therefore the desired identity holds.

Finally, to deduce that $\ZZ_0$ represents a nonzero class in $H^1(G,\D_0)$, first note that conditions $\gamma^\Tr\omega=\alpha\omega$ and $\alpha^{-1}v\equiv v\mod\Lambda$ imply that $x\gamma^{-\Tr}\equiv x\mod\Z^2$. Then any 1-coboundary $\Xi$ for the action of $G$ on $\D_0$ gives $\Xi(\gamma)(x,\omega)=0$ for any $\gamma$ and $(x,\omega)$ as in Theorem~\ref{ZZ_0.thm}\eqref{ZZ_0.thm.rati.} with $\alpha>0$, as $$\nu(x\gamma^{-\Tr},\gamma^\Tr\omega)=\nu(x,\alpha\omega)=\nu(x,\omega)\qquad\text{for any $\nu\in\D_0$}.$$ 
Therefore, $\ZZ_0$ is not a 1-coboundary since $\ZZ_0(\gamma)(x,\omega)=-9/20$ for
$$\gamma=\begin{pmatrix}
26&-45\\-15&26
\end{pmatrix}, \qquad x=\Big(\frac{1}{5},0\Big), \qquad\omega=(5,5\sqrt{3})^\Tr, \qquad \alpha=26-15\sqrt{3}.$$
This completes the proof.
\end{proof}

\section{Proof of Theorem \ref{Hayes.Espinoza.thm.}}\label{Hayes.Espinoza.sect.}

Theorem~\ref{Hayes.Espinoza.thm.} will follow from \eqref{ZZ_0.defi.} by a rather direct computation together with a lattice point counting inspired by Hayes' proof. During this section we let 
\begin{align*}
\gamma=\begin{pmatrix}
a&b\\c&d
\end{pmatrix} \ \in \ \SL_2(\Z) \qquad\text{with $c>0$ and $\ell_\gamma>1$,}
\end{align*}
$x=(x_1,0)\in\Q^2$ with $x_1\in\ell_\gamma^{-1}\Z\smallsetminus\Z$, and $\omega\in\Mirr_{2\times1}(\R)$. We also set $\tau:=\omega_2/\omega_1$ and we recall the definition $\ell_\gamma:=\gcd(c,a-1)$ of the content of $\gamma$.

First note that \eqref{ZZ_0.defi.} gives
\begin{align*}
\ZZ_0(\gamma)(x,\omega)-&\ZZ_0(\gamma)(\ell_\gamma x,\ell_\gamma^{-1}\omega)=\\
&\frac{\sgn(a+c\tau)}{4}+\sum_{m\in\Z/c\Z}\Big(\Ber_1\!\Big(x_1-m\frac{a}{c}\Big)-\Ber_1\!\Big(-m\frac{a}{c}\Big)\Big)\Ber_1\!\Big(\frac{m}{c}\Big),
\end{align*}
as $cx_1\in\Z$. Then note that $x_1-ma/c$ is an integer if and only if $c$ divides $ma-cx_1$, so $c$ does not divide $m$ when $x_1-ma/c$ is an integer because $x_1\notin\Z$. Furthermore, $ad$ is congruent to $1$ modulo $c$ since $\gamma\in\SL_2(\Z)$. This shows that
\begin{align*}
\sum_{m\in\Z/c\Z}&\Big(\Ber_1\!\Big(x_1-m\frac{a}{c}\Big)-\Ber_1\!\Big(-m\frac{a}{c}\Big)\Big)\Ber_1\!\Big(\frac{m}{c}\Big)=\\
&\Ber_1(x_1)\Ber_1(dx_1)+\sum_{\substack{m\in\Z/c\Z\\ m\not\equiv cdx_1}}\Big(\Big\{x_1-m\frac{a}{c}\Big\}-\Big\{-m\frac{a}{c}\Big\}\Big)\Ber_1\Big(\frac{m}{c}\Big).
\end{align*}
Then if we let $A:=\big\{m\in\Z/c\Z \ : \ \{x_1\}+\{-ma/c\}\geq1\big\}$,
we get
\begin{align*}
\sum_{\substack{m\in\Z/c\Z\\ m\not\equiv cdx_1}}\Big(\Big\{x_1-m\frac{a}{c}\Big\}-&\Big\{-m\frac{a}{c}\Big\}\Big)\Ber_1\Big(\frac{m}{c}\Big)=\\
&\sum_{\substack{m\in\Z/c\Z\\m\not\equiv cdx_1}}\{x_1\}\Ber_1\Big(\frac{m}{c}\Big)-\sum_{\substack{m\in A\\ m\not\equiv cdx_1}}\Ber_1\Big(\frac{m}{c}\Big).
\end{align*}
Note that $\{-ma/c\}=\{-x_1\}$ if $c$ divides $m-cdx_1$, so $m\in A$ if $c$ divides $m-cdx_1$. On the whole we obtain
\begin{align}\label{Eq.2.1}
&\ZZ_0(\gamma)(x,\omega)-\ZZ_0(\gamma)(\ell_\gamma x,\ell_\gamma^{-1}\omega)\\
&\nonumber=\frac{\sgn(a+c\tau)}{4}+\Ber_1(x_1)\Ber_1(dx_1)+\sum_{\substack{m\in\Z/c\Z\\m\not\equiv cdx_1}}\{x_1\}\Ber_1\Big(\frac{m}{c}\Big)-\sum_{\substack{m\in A\\ m\not\equiv cdx_1}}\Ber_1\Big(\frac{m}{c}\Big)\\
&\nonumber=\frac{\sgn(a+c\tau)}{4}+\Ber_1(x_1)\Ber_1(dx_1)+\{x_1\}\sum_{\substack{m\in\Z/c\Z}}\Ber_1\Big(\frac{m}{c}\Big)-\sum_{\substack{m\in A}}\Ber_1\Big(\frac{m}{c}\Big)
\\ \nonumber & \qquad\qquad\qquad\qquad\qquad\qquad\qquad\qquad\qquad\qquad-\{x_1\}\Ber_1(dx_1)+\Ber_1(dx_1)\\
\nonumber &=\frac{\sgn(a+c\tau)}{4}+\Ber_1(dx_1)\big(\Ber(x_1)-\{x_1\}+1\big)-\sum_{\substack{m\in A}}\Ber_1\Big(\frac{m}{c}\Big)\\
\nonumber &=\frac{\sgn(a+c\tau)}{4}+\frac{1}{2}\Ber_1(dx_1)-\sum_{\substack{m\in A}}\Ber_1\Big(\frac{m}{c}\Big),
\end{align}
where we used that $\sum_{m\in\Z/c\Z}\Ber_1(m/c)=0$. 

\begin{lemma}\label{Lemma.2.1} If $A=\big\{m\in\Z/c\Z \ : \ \{x_1\}+\{-ma/c\}\geq1\big\}$, then the main term on the rightmost side of \eqref{Eq.2.1} satisfies
\begin{align*}
\sum_{\substack{m\in A}}\Ber_1\Big(\frac{m}{c}\Big)=-\frac{\{x_1\}}{2}c+\sum_{m=1}^{c\{x_1\}}\Big\{m\frac{d}{c}\Big\}.
\end{align*}
\end{lemma}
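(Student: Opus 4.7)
The plan is to parametrize the set $A$ explicitly and then evaluate each $\Ber_1(m/c)$ via the closed form $\Ber_1(y)=\{y\}-1/2$, which is valid whenever $y\notin\Z$. First I would record the numerical facts that drive the argument: since $\gamma\in\SL_2(\Z)$, one has $\gcd(a,c)=1$ and $ad\equiv1\pmod c$; and since $\ell_\gamma\mid c$ and $x_1\in\ell_\gamma^{-1}\Z$, the quantity $r:=c\{x_1\}$ is an integer, which by the hypothesis $x_1\notin\Z$ satisfies $1\leq r\leq c-1$.

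Next I would reparametrize $A$ through the bijection $m\mapsto n:=(-ma\bmod c)$ of $\Z/c\Z$, available because $\gcd(a,c)=1$. Under it, the defining inequality $\{x_1\}+\{-ma/c\}\geq1$ becomes $r/c+n/c\geq 1$, i.e., $n\in\{c-r,c-r+1,\ldots,c-1\}$. In particular $n\neq0$ on $A$, and since $\gcd(d,c)=1$ and $m\equiv -nd\pmod c$ (obtained from $ad\equiv1$), we also have $m\not\equiv0\pmod c$. Consequently $m/c\notin\Z$ for every $m\in A$, so I may safely replace $\Ber_1(m/c)$ by $\{m/c\}-1/2$.

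The final step is a change of variable: set $j:=c-n$, so as $n$ ranges over $\{c-r,\ldots,c-1\}$ the new index $j$ ranges over $\{1,2,\ldots,r\}$, and $m\equiv -nd\equiv jd\pmod c$. Hence $\{m/c\}=\{jd/c\}$, and summing yields
\begin{align*}
\sum_{m\in A}\Ber_1\!\Big(\frac{m}{c}\Big)=\sum_{j=1}^{r}\Big(\Big\{\frac{jd}{c}\Big\}-\frac{1}{2}\Big)=-\frac{c\{x_1\}}{2}+\sum_{m=1}^{c\{x_1\}}\Big\{\frac{md}{c}\Big\},
\end{align*}
which is the claimed identity after renaming $j$ as $m$. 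I do not foresee a serious obstacle: the argument reduces to a straightforward bijective rearrangement, and the only delicate point is ruling out the cases $m\equiv0$ and $n\equiv0$, where the two branches of $\Ber_1$ would otherwise disagree.
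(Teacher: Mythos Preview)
Your proof is correct and follows essentially the same approach as the paper: both reparametrize the sum via the bijection of $\Z/c\Z$ induced by multiplication by $-a$ (with inverse $-d$), identify the resulting range explicitly as $\{c-r,\dots,c-1\}$ (equivalently $\{1,\dots,r\}$ after your shift $j=c-n$), check that none of the arguments of $\Ber_1$ are integers, and then evaluate each term as $\{y\}-1/2$. The paper arrives at the range $\{1,\dots,c\{x_1\}\}$ via the oddness and periodicity of $\Ber_1$ in two intermediate steps, whereas your substitution $j=c-n$ achieves the same shift in one move; the arguments are otherwise identical.
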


\begin{proof}
Elementary arguments show that 
\begin{align*}
\sum_{\substack{m\in A}}\Ber_1\Big(\frac{m}{c}\Big)=\sum_{\substack{m\in\Z/c\Z\\ \{x_1\}+\{m/c\}\geq1}}\Ber_1\Big(-m\frac{d}{c}\Big)=-\sum_{\substack{m\in\Z/c\Z\\ \{x_1\}+\{m/c\}\geq1}}\Ber_1\Big(m\frac{d}{c}\Big)\\
=-\sum_{\substack{m=c(1-\{x_1\})}}^{c-1}\Ber_1\Big(m\frac{d}{c}\Big).
\end{align*}
Note in the last sum that $c(1-\{x_1\})$ is an integer since $c\{x_1\}=c(x_1-\lfloor x_1\rfloor)$ is a positive integer, as $x_1\notin\Z$ and $cx_1\in\Z$. Then
\begin{align*}
-\sum_{\substack{m=c(1-\{x_1\})}}^{c-1}\Ber_1\Big(m\frac{d}{c}\Big)=\sum_{\substack{m=1-c}}^{c(\{x_1\}-1)}\Ber_1\Big(m\frac{d}{c}\Big)=\sum_{\substack{m=1}}^{c\{x_1\}}\Ber_1\Big(m\frac{d}{c}\Big)\\
=-\frac{1}{2}c\{x_1\}+\sum_{\substack{m=1}}^{c\{x_1\}}\Big\{m\frac{d}{c}\Big\},
\end{align*}
which proves the result.
\end{proof}

Equation~\eqref{Eq.2.1} and Lemma~\ref{Lemma.2.1} show that
\begin{align}\label{Eq.left.Hayes}
\ZZ_0(\gamma)(x,\omega)-&\ZZ_0(\gamma)(\ell_\gamma x,\ell_\gamma^{-1}\omega)=\\
&\nonumber\frac{\sgn(a+c\tau)}{4}+\frac{1}{2}\Ber_1(dx_1)+\frac{\{x_1\}}{2}c-\sum_{\substack{m=1}}^{c\{x_1\}}\Big\{m\frac{d}{c}\Big\}.
\end{align}
Then it only remains to show that the right-hand side of \eqref{Eq.left.Hayes} equals the right-hand side of \eqref{Hayes.Espinoza.formula}. For this purpose recall $T_{\gamma,x}$, the dilation $\{x_1\}T_\gamma$ of the triangle $T_\gamma$ in $\R^2$ having vertices at $(0,0)$, $(a-1,c)$, and $(-1,0)$. Let us first count the number of integral points belonging to the interior $T^\circ_{\gamma,x}$ of $T_{\gamma,x}$ in the Euclidean plane. 

Let $\{e_1, e_2\}$ be the canonical basis of $\R^2$ and let $\lambda=me_1+ne_2\in\Z^2$. Note that $\lambda$ lies in $T^\circ_{\gamma,x}$ if and only if $\lambda+\{x_1\}e_1$ lies in $T^\circ_{\gamma,x}+\{x_1\}e_1$. The vertices of the boundary of $\T:=T^\circ_{\gamma,x}+\{x_1\}e_1$ are the origin, $\{x_1\}e_1$, and $a\{x_1\}e_1+c\{x_1\}e_2$. So we look for $t_1, t_2\in\R$ such that $0<t_1<1$, $0<t_2<1-t_1$, and
\begin{align*}
\lambda+\{x_1\}e_1=t_1\{x_1\}e_1+t_2\big(a\{x_1\}e_1+c\{x_1\}e_2\big).
\end{align*} 
Consider the change of variables $m=ar+bt$ and $n=cr+dt$ with $r, t\in\Z$. This substitution does not have an effect on the generic quality of $\lambda$ since $\gamma\in\SL_2(\Z)$. Then 
\begin{align*}
\lambda+\{x_1\}e_1=\Big(\{x_1\}-\frac{t}{c}\Big)e_1+\Big(r+\frac{dt}{c}\Big)(ae_1+ce_2),
\end{align*}
so $\lambda+\{x_1\}e_1$ lies in $\T$ if and only if $0<t<c\{x_1\}$ and $(d-1)t/c<-r<dt/c$. Note that $dt/c$ is an integer if and only if $c$ divides $t$, but the latter is impossible since $0<t<c\{x_1\}<c$. Hence $\lambda+\{x_1\}e_1$ lies in $\T$ if and only if $0<t<c\{x_1\}$ and $(d-1)t/c<-r\leq\lfloor dt/c\rfloor$. Therefore 
\begin{align}\label{Numb.latt.point.}
\#(\Z^2\cap T^\circ_{\gamma,x})=\#\big((\Z^2+\{x_1\}e_1)\cap \T\big)=\sum_{m=1}^{c\{x_1\}-1}\Big\lfloor m\frac{d}{c}\Big\rfloor-\Big\lfloor m\frac{d-1}{c}\Big\rfloor.
\end{align}
Following Hayes we obtain the next lemma.

\begin{lemma}\label{Lemma.2.2}
Regarding Equation~\eqref{Numb.latt.point.}, we have
\begin{align*}
\#(\Z^2\cap T^\circ_{\gamma,x})=\frac{\{x_1\}}{2}\big(c\{x_1\}+1+c-\ell_\gamma\big)-\sum_{m=1}^{c\{x_1\}}\Big\{m\frac{d}{c}\Big\}.
\end{align*}
\end{lemma}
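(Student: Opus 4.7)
My plan is to start from formula \eqref{Numb.latt.point.} and convert each difference of floors into fractional parts via $\lfloor y\rfloor = y-\{y\}$. Setting $N:=c\{x_1\}$, which is a positive integer strictly less than $c$ (since $x_1\in\ell_\gamma^{-1}\Z\smallsetminus\Z$), this rewrites $\#(\Z^2\cap T^\circ_{\gamma,x})$ as
\begin{align*}
\frac{N(N-1)}{2c}\;-\;\sum_{m=1}^{N-1}\Big\{m\frac{d}{c}\Big\}\;+\;\sum_{m=1}^{N-1}\Big\{m\frac{d-1}{c}\Big\}.
\end{align*}
The first term is already $\{x_1\}(N-1)/2$, and the $d/c$-sum differs from the $\sum_{m=1}^N\{md/c\}$ appearing on the right-hand side of the claim by just a single term. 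So the whole argument reduces to evaluating the third sum in closed form.

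The key step is to show $\gcd(c,d-1)=\ell_\gamma$. Since $\gamma\in\SL_2(\Z)$, we have $ad\equiv 1\pmod c$, so $d(a-1)\equiv -(d-1)\pmod c$; coupled with $\gcd(c,d)=1$ this yields $\gcd(c,d-1)=\gcd(c,a-1)=\ell_\gamma$. Writing $c=\ell_\gamma c_0$ and $d-1=\ell_\gamma d_0$ with $\gcd(c_0,d_0)=1$, the sequence $m\mapsto\{m(d-1)/c\}=\{md_0/c_0\}$ has period $c_0$ and sums to $(c_0-1)/2$ over one period, because $d_0$ permutes the residues modulo $c_0$. The hypothesis $x_1\in\ell_\gamma^{-1}\Z$ forces $k:=\ell_\gamma\{x_1\}$ to be an integer in $\{1,\ldots,\ell_\gamma-1\}$, so $N=kc_0$ is exactly $k$ full periods; peeling off the vanishing contribution at $m=N=kc_0$ gives
\begin{align*}
\sum_{m=1}^{N-1}\Big\{m\frac{d-1}{c}\Big\}\;=\;\frac{k(c_0-1)}{2}\;=\;\frac{\{x_1\}(c-\ell_\gamma)}{2}.
\end{align*}

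To finish, I would write $\sum_{m=1}^{N-1}\{md/c\}=\sum_{m=1}^N\{md/c\}-\{Nd/c\}$, note that $\{Nd/c\}=\{dx_1\}$, and observe $\{dx_1\}=\{x_1\}$ because $(d-1)x_1=\frac{d-1}{\ell_\gamma}(\ell_\gamma x_1)\in\Z$. Assembling the three pieces and collecting the common factor $\{x_1\}/2$ rearranges directly into the stated identity.

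The main obstacle is the period-matching step in the middle: one must recognize that the content hypothesis $x_1\in\ell_\gamma^{-1}\Z$ is precisely what forces $N$ to be a whole number of periods $c_0=c/\ell_\gamma$ of $\{m(d-1)/c\}$, which is what enables that sum to evaluate cleanly. Everything else is routine bookkeeping.
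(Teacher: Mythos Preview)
Your proof is correct and follows essentially the same route as the paper: both convert floors to fractional parts via $\lfloor y\rfloor=y-\{y\}$, establish $\gcd(c,d-1)=\ell_\gamma$, use that $N=c\{x_1\}$ is an integer multiple of $c_0=c/\ell_\gamma$ so that the $(d-1)/c$ fractional-part sum decomposes into whole periods evaluating to $(c_0-1)/2$ each, and use $(d-1)\{x_1\}\in\Z$ to identify $\{d\{x_1\}\}=\{x_1\}$. The only difference is organizational: the paper converts $\lfloor md/c\rfloor$ first and treats $\sum\lfloor m(d-1)/c\rfloor$ as a separate block, whereas you convert both floors simultaneously so that the linear parts collapse immediately to $m/c$; this is slightly tidier but not a different argument.
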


\begin{proof}
Elementary arguments show that
\begin{align*}
\sum_{m=1}^{c\{x_1\}-1}\Big\lfloor m\frac{d}{c}\Big\rfloor-\Big\lfloor m\frac{d-1}{c}\Big\rfloor=\sum_{m=1}^{c\{x_1\}-1}\Big(m\frac{d}{c}-\Big\{m\frac{d}{c}\Big\}\Big)-\sum_{m=1}^{c\{x_1\}-1}\Big\lfloor m\frac{d-1}{c}\Big\rfloor\\
=\big\{d\{x_1\}\big\}+\frac{d}{c}\sum_{m=0}^{c\{x_1\}-1}m-\sum_{m=1}^{c\{x_1\}}\Big\{m\frac{d}{c}\Big\}-\sum_{m=0}^{c\{x_1\}-1}\Big\lfloor m\frac{d-1}{c}\Big\rfloor.
\end{align*}
On the other hand, we note that
\begin{align}\label{content.change.prop}
\ell_\gamma:=\gcd(c,a-1)=\gcd(c,a+bc-ad)
=\gcd(c,a(1-d))=\gcd(c,d-1)
\end{align}
since $\gamma\in\SL_2(\Z)$, so $\ell_\gamma$ divides $d-1$. In particular, $(d-1)\{x_1\}$ is an integer, so we obtain $\big\{d\{x_1\}\big\}=\{x_1\}$. Also, if we let $d^*/c^*=(d-1)/c$, where $d^*:=(d-1)/\ell_\gamma$ and $c^*:=c/\ell_\gamma$ are coprime integers, we get
\begin{align*}
\sum_{m=0}^{c\{x_1\}-1}\Big\lfloor m\frac{d-1}{c}\Big\rfloor&=\sum_{m=0}^{c\{x_1\}-1}m\frac{d-1}{c}-\sum_{m=0}^{c\{x_1\}-1}\Big\{m\frac{d-1}{c}\Big\}\\
&=\frac{d}{c}\sum_{m=0}^{c\{x_1\}-1}m-\frac{1}{c}\sum_{m=0}^{c\{x_1\}-1}m-\sum_{j=0}^{\ell_\gamma\{x_1\}-1}\sum_{r=0}^{c^*-1}\Big\{(r+jc^*)\frac{d-1}{c}\Big\}\\
&=\frac{\{x_1\}}{2}\big(1-c\{x_1\}\big)+\frac{d}{c}\sum_{m=0}^{c\{x_1\}-1}m-\sum_{j=0}^{\ell_\gamma\{x_1\}-1}\sum_{r=0}^{c^*-1}\Big\{r\frac{d^*}{c^*}\Big\}\\
&=\frac{\{x_1\}}{2}\big(1-c\{x_1\}\big)+\frac{d}{c}\sum_{m=0}^{c\{x_1\}-1}m-\ell_\gamma\{x_1\}\sum_{r=0}^{c^*-1}\Big\{r\frac{d^*}{c^*}\Big\}\\
&=\frac{\{x_1\}}{2}\big(1-c\{x_1\}\big)+\frac{d}{c}\sum_{m=0}^{c\{x_1\}-1}m-\ell_\gamma\{x_1\}\sum_{r=0}^{c^*-1}\frac{r}{c^*}\\
&=\frac{\{x_1\}}{2}\big(1-c\{x_1\}\big)+\frac{d}{c}\sum_{m=0}^{c\{x_1\}-1}m-\frac{\{x_1\}}{2}\big(c-\ell_\gamma\big)\\
&=-\frac{\{x_1\}}{2}\big(c\{x_1\}-1+c-\ell_\gamma\big)+\frac{d}{c}\sum_{m=0}^{c\{x_1\}-1}m,
\end{align*}
where we used that ``multiplication by $d^*$'' induces an automorphism of the additive group $\Z/c^*\Z$. Therefore our result follows.
\end{proof}

Now we count the number of integral points $\lambda$ in the boundary $\partial T_{\gamma,x}$ of $T_{\gamma,x}$.  Again, we note that $\lambda$ lies in $\partial T_{\gamma,x}$ if and only if $\lambda+\{x_1\}e_1$  lies in $\partial T_{\gamma,x}+\{x_1\}e_1$. We start by considering the edge $\E_1$ of $\partial T_{\gamma,x}+\{x_1\}e_1$ joining the vertices $\{x_1\}e_1$ and $a\{x_1\}e_1+c\{x_1\}e_2$, both inclusive. Thus we look for $t_1, t_2\in\R$ such that $0\leq t_1\leq 1$, $t_2=1-t_1$, and
\begin{align*}
\lambda+\{x_1\}e_1=t_1\{x_1\}e_1+t_2\big(a\{x_1\}e_1+c\{x_1\}e_2\big).
\end{align*} 
Recall that for a generic $\lambda\in\Z^2$,
\begin{align*}
\lambda+\{x_1\}e_1=\Big(\{x_1\}-\frac{t}{c}\Big)e_1+\Big(r+\frac{dt}{c}\Big)(ae_1+ce_2)\qquad\qquad(r,t\in\Z),
\end{align*}
so we would like to find conditions on $r$ and $t$ such that $0\leq\{x_1\}-t/c\leq\{x_1\}$ and $r+dt/c=t/c$, which is equivalent to having $0\leq t\leq c\{x_1\}$ and $-r=d^*t/c^*$, where $d^*=(d-1)/\ell_\gamma$ and $c^*=c/\ell_\gamma$ as in the proof of Lemma~\ref{Lemma.2.2}. Then these conditions hold if and only if $c^*$ divides $t$, $0\leq t\leq c\{x_1\}$, and $-r=d^*t/c^*$. Therefore there are $1+\ell_\gamma\{x_1\}$ points of the form $\lambda+\{x_1\}e_1$ in $\E_1$. Hence we have proved the following.

\begin{lemma}\label{Lemma.2.3}
There are $1+\ell_\gamma\{x_1\}$ integral points in the edge of $T_{\gamma,x}$ joining the origin and $(a-1)\{x_1\}e_1+c\{x_1\}e_2$, both vertices inclusive.
\end{lemma}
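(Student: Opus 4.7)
The plan is to reduce the count to a one-dimensional lattice-point problem by parameterizing the edge and invoking the unimodular change of coordinates already used in the proof of Lemma~\ref{Lemma.2.2}. First I would translate everything by $\{x_1\}e_1$: counting $\lambda\in\Z^2$ on the edge of $T_{\gamma,x}$ from the origin to $(a-1)\{x_1\}e_1+c\{x_1\}e_2$ is equivalent to counting points of the form $\lambda+\{x_1\}e_1$ on the translated edge $\E_1$ from $\{x_1\}e_1$ to $a\{x_1\}e_1+c\{x_1\}e_2$, which is precisely the segment treated in the paragraph preceding the statement of the lemma.

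Next I would write $\lambda=me_1+ne_2$ and apply the bijective substitution $(m,n)=(ar+bt,\,cr+dt)$ with $r,t\in\Z$; this is a bijection of $\Z^2$ because $\det\gamma=1$. A short calculation yields
\begin{align*}
\lambda+\{x_1\}e_1=\Big(\{x_1\}-\frac{t}{c}\Big)e_1+\Big(r+\frac{dt}{c}\Big)\bigl(ae_1+ce_2\bigr).
\end{align*}
Matching against the parameterization $t_1\{x_1\}e_1+(1-t_1)(a\{x_1\}e_1+c\{x_1\}e_2)$ of $\E_1$ with $t_1\in[0,1]$, membership $\lambda+\{x_1\}e_1\in\E_1$ collapses to the two conditions $0\leq t\leq c\{x_1\}$ and $r+(d-1)t/c=0$.

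The combinatorial core is then to count pairs $(r,t)\in\Z^2$ satisfying these constraints. Setting $c^*=c/\ell_\gamma$ and $d^*=(d-1)/\ell_\gamma$, the second condition reads $-r=d^*t/c^*$; since $\gcd(c^*,d^*)=1$ by \eqref{content.change.prop}, it forces $c^*\mid t$. The admissible values of $t$ are therefore the multiples $0,c^*,2c^*,\ldots,\ell_\gamma\{x_1\}\,c^*$ lying in $[0,c\{x_1\}]$, and $\ell_\gamma\{x_1\}\in\Z_{\geq0}$ since $x_1\in\ell_\gamma^{-1}\Z$. This produces exactly $1+\ell_\gamma\{x_1\}$ admissible values of $t$, each uniquely determining $r$, which is the desired count.

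The only point I would want to check carefully is that both endpoints are genuinely attained: $t=0$ corresponds to the origin and $t=c\{x_1\}$ to the vertex $(a-1)\{x_1\}e_1+c\{x_1\}e_2$ (using $c^*\mid c\{x_1\}$, i.e.\ $\ell_\gamma\{x_1\}\in\Z$). I do not anticipate a serious obstacle here; the argument is a mechanical unwinding of the parameterization, so it should amount only to bookkeeping once the $\SL_2(\Z)$-substitution and the coprimality $\gcd(c^*,d^*)=1$ are in hand.
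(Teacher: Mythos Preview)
Your proposal is correct and follows essentially the same argument as the paper: the translation by $\{x_1\}e_1$, the unimodular change of variables $(m,n)=(ar+bt,cr+dt)$, the reduction to the conditions $0\le t\le c\{x_1\}$ and $-r=d^*t/c^*$, and the conclusion via $\gcd(c^*,d^*)=1$ are exactly the steps the paper carries out in the paragraph preceding the lemma. Your added remark that the endpoints $t=0$ and $t=c\{x_1\}$ are attained is a harmless bit of extra care.
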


Now we enumerate the number of points of the form $\lambda+\{x_1\}e_1$ in the edge $\E_2$ of $\partial T_{\gamma,x}+\{x_1\}e_1$ joining the origin (inclusive) and $\{x_1\}e_1$ (exclusive). So we look for $t_1\in\R$ such that $0\leq t_1< 1$ and
\begin{align*}
\lambda+\{x_1\}e_1=\Big(\{x_1\}-\frac{t}{c}\Big)e_1+\Big(r+\frac{dt}{c}\Big)(ae_1+ce_2)=t_1\{x_1\}e_1
\end{align*} 
for some $r, t\in\Z$, which would imply that $c$ divides $t$ and $0<t\leq c\{x_1\}$ for some $t\in\Z$, a contradiction. Then there are no points of the form $\lambda+\{x_1\}e_1$ in $\E_2$.

\begin{lemma}\label{Lemma.2.4}
There are no integral points in the edge of $T_{\gamma,x}$ joining the vertices $-\{x_1\}e_1$ (inclusive) and the origin (exclusive).
\end{lemma}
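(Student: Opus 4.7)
The plan is to recognize that Lemma~\ref{Lemma.2.4} is simply the restatement, in the original coordinates of $T_{\gamma,x}$, of the computation carried out for the edge $\E_2$ in the paragraph that immediately precedes its statement. No new work is required beyond tracking the translation by $\{x_1\}e_1$.

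First I would observe that the map $\lambda\mapsto\lambda+\{x_1\}e_1$ is a bijection between $\Z^2$ and $\Z^2+\{x_1\}e_1$ which sends $\Z^2\cap\partial T_{\gamma,x}$ to $(\Z^2+\{x_1\}e_1)\cap\partial(T_{\gamma,x}+\{x_1\}e_1)$. Under this translation, the edge of $T_{\gamma,x}$ joining $-\{x_1\}e_1$ (inclusive) and the origin (exclusive) maps exactly onto the edge joining the origin (inclusive) and $\{x_1\}e_1$ (exclusive), which is $\E_2$.

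Hence counting integral points on the stated edge of $T_{\gamma,x}$ is the same as counting points of the form $\lambda+\{x_1\}e_1$ on $\E_2$. The preceding paragraph has already shown, via the parametrization $\lambda+\{x_1\}e_1=(\{x_1\}-t/c)e_1+(r+dt/c)(ae_1+ce_2)$ with $(r,t)\in\Z^2$, that membership in $\E_2$ would force both $c\mid t$ and $0<t\le c\{x_1\}$; since $\{x_1\}<1$ implies $c\{x_1\}<c$, this is impossible. Therefore no such $\lambda$ exists, which is precisely the content of the lemma.

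Given that the argument has effectively been completed already, there is no substantive obstacle here; the only thing worth being careful about is the direction of the inclusive/exclusive endpoints, which must match under the translation so that the parametric constraint $0\le t_1<1$ (rather than $0<t_1\le 1$) is preserved. The lemma is recorded separately because it feeds, together with Lemmas~\ref{Lemma.2.2} and~\ref{Lemma.2.3}, into the weighted lattice-point count $\sum_{\lambda\in\Z^2}w_{\gamma,x}(\lambda)$ that will ultimately match the right-hand side of \eqref{Hayes.Espinoza.formula}.
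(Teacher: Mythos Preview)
Your proposal is correct and follows exactly the paper's approach: the paper proves the claim in the paragraph immediately preceding the lemma by working on the translated edge $\E_2$ with the same parametrization, deriving the same contradiction from $c\mid t$ and $0<t\le c\{x_1\}<c$, and then simply records the conclusion as Lemma~\ref{Lemma.2.4}.
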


Finally we focus on points of the form $\lambda+\{x_1\}e_1$ in the line segment $\E_3$ joining the origin and $a\{x_1\}e_1+c\{x_2\}e_2$, both end points exclusive. In this case we seek $t_2\in\R$ such that $0<t_2<1$ and 
\begin{align*}
\lambda+\{x_1\}e_1=\Big(\{x_1\}-\frac{t}{c}\Big)e_1+\Big(r+\frac{dt}{c}\Big)(ae_1+ce_2)=t_2\big(a\{x_1\}e_1+c\{x_1\}e_2\big)
\end{align*}
for some integers $r$ and $t$. Note that the existence of such $t_2$ would imply the existence of $r\in\Z$ such that $(d-1)\{x_1\}<-r<d\{x_1\}$, which is impossible since $(d-1)\{x_1\}\in\Z$. Then there are no such $t_2$.

\begin{lemma}\label{Lemma.2.5}
There are no integral points in the edge of $T_{\gamma,x}$ joining  $-\{x_1\}e_1$ and $(a-1)\{x_1\}e_1+c\{x_1\}e_2$, both vertices exclusive.
\end{lemma}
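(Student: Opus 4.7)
The plan is to continue the edge-by-edge lattice-point analysis used in Lemmas~\ref{Lemma.2.3} and \ref{Lemma.2.4}. After translating by $\{x_1\}e_1$, an integer point $\lambda\in\Z^2$ lies on the edge of $T_{\gamma,x}$ in question precisely when $\lambda+\{x_1\}e_1$ lies on the edge $\E_3$ of the translated triangle joining the origin and $a\{x_1\}e_1+c\{x_1\}e_2$, both endpoints excluded. I would parameterize $\E_3$ as $t_2\bigl(a\{x_1\}e_1+c\{x_1\}e_2\bigr)$ with $0<t_2<1$.

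I would then apply the same unimodular substitution $m=ar+bt$, $n=cr+dt$ that was used before, so that $\lambda+\{x_1\}e_1=(\{x_1\}-t/c)e_1+(r+dt/c)(ae_1+ce_2)$. Expanding both sides in the basis $\{e_1,\ ae_1+ce_2\}$ of $\R^2$ (a basis because $c\neq 0$), the $e_1$-component forces $t=c\{x_1\}$, which is an integer since $cx_1\in\Z$; the other component then yields $-r=(d-t_2)\{x_1\}$. Hence the existence of a qualifying $t_2\in(0,1)$ is equivalent to producing an integer $r$ satisfying $(d-1)\{x_1\}<-r<d\{x_1\}$.

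The substance of the argument is then to rule out integers in this open interval. I would invoke $\ell_\gamma=\gcd(c,d-1)$, already established in \eqref{content.change.prop} during the proof of Lemma~\ref{Lemma.2.2}; combined with the hypothesis $x_1\in\ell_\gamma^{-1}\Z$ (which gives $\ell_\gamma\{x_1\}\in\Z$), this yields $(d-1)\{x_1\}\in\Z$. Since $\{x_1\}\in(0,1)$, the interval $\bigl((d-1)\{x_1\},d\{x_1\}\bigr)$ has length $\{x_1\}<1$ and its left endpoint is already an integer, so it contains no integer. This contradicts the existence of the required $r\in\Z$ and completes the argument. There is no serious obstacle here, but the divisibility step $(d-1)\{x_1\}\in\Z$ is precisely where the hypothesis $x_1\in\ell_\gamma^{-1}\Z$ is essential, exactly as in the preceding edge counts.
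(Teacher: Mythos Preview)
Your argument is correct and follows essentially the same route as the paper: translate by $\{x_1\}e_1$, use the unimodular change of variables to rewrite $\lambda+\{x_1\}e_1$ in the basis $\{e_1,\,ae_1+ce_2\}$, and reduce the existence of an interior lattice point on $\E_3$ to finding an integer in the open interval $\bigl((d-1)\{x_1\},\,d\{x_1\}\bigr)$, which is ruled out since $(d-1)\{x_1\}\in\Z$ and $\{x_1\}<1$. The paper compresses the intermediate step of solving for $t$ into a single sentence, but your more explicit derivation is the same argument.
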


Now we are in position to prove Theorem~\ref{Hayes.Espinoza.thm.}.

\begin{proof}[Proof of Theorem~\ref{Hayes.Espinoza.thm.}]
Combining Lemma~\ref{Lemma.2.2}, Lemma~\ref{Lemma.2.3}, Lemma~\ref{Lemma.2.4}, and Lemma~\ref{Lemma.2.5}, we obtain that
\begin{align*}
\Big(\sum_{\lambda\in\Z^2}w_{\gamma,x}(\lambda)\Big)&-\mathrm{area}(T_{\gamma,x})+\frac{\sgn(a+c\tau)-3}{4}\\
&=\frac{\{x_1\}}{2}\big(1+c\big)-\frac{1}{4}+\frac{\sgn(a+c\tau)}{4}-\sum_{m=1}^{c\{x_1\}}\Big\{m\frac{d}{c}\Big\}\\
&=\frac{1}{2}\Ber_1(x_1)+\frac{\{x_1\}}{2}c+\frac{\sgn(a+c\tau)}{4}-\sum_{m=1}^{c\{x_1\}}\Big\{m\frac{d}{c}\Big\}.
\end{align*}
Since $dx_1$ is congruent to $x_1$ modulo $\Z$, Equation~\eqref{Eq.left.Hayes} finishes the proof.
\end{proof}

\section{Proof of Theorem \ref{Ehrhart.Espinoza.thm}}\label{Ehrhart.Espinoza.sect.}
The proof of Theorem~\ref{Ehrhart.Espinoza.thm} can be summarized as follows. Firstly, we relate the right-hand side of \eqref{Hayes.Espinoza.formula} to the Ehrhart quasi-polynomials of certain triangles in $\R^2$. Then we establish a handy relation concerning the content $\ell_{\gamma\gamma'}$ of the product $\gamma\gamma'$ of a pair of $\SL_2(\Z)$ matrices $\gamma$ and $\gamma'$. Finally, we combine the previous ideas with the cocycle property of $\ZZ_0$ in order to get \eqref{Ehrhart.Espinoza.formula}. 

We start by briefly reviewing the results from the Ehrhart theory of polytopes that we will use subsequently. For a thorough treatment of this theory, we refer the reader to \cite{Li11}.

\subsection{Some elements of the Ehrhart theory of polytopes}
Let $\Po$ be a rational convex polytope in the $n$-dimensional Euclidean space $\R^n$, \ie $\Po$ is convex and it has vertices in $\Q^n$. Let $\G(\Po,m):=\#(\Z^n\cap m\Po)$ for all $m\in\Z_{\geq0}$. Let $\den(\Po)$ be the smallest positive integer $d$ such that $d\Po$ is an integral polytope, so $\den(\Po)\Po$ has vertices in $\Z^n$.

\begin{theorem*}[Ehrhart, 1962] If $\dim(\Po)$ denotes the dimension of $\Po$, then
$$\G(\Po,m)=\sum_{i=0}^{\dim(\Po)}\G_i(\Po,m)m^i\qquad\qquad\text{for all $m\in\Z_{\geq0}$,}$$
where each $\G_i(\Po):\Z_{\geq0}\longrightarrow \R$ satisfies $$\G_i\big(\Po,\,m+\den(\Po)\big)=\G_i(\Po,m)\qquad\qquad\text{for all $m\in\Z_{\geq0}$.}$$ 
\end{theorem*}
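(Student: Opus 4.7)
The plan is to follow the classical proof of Ehrhart's theorem via Ehrhart series and a half-open decomposition. First I would reduce to the case that $\Po$ is a rational simplex: choose a triangulation of $\Po$ using only its vertices, so each simplex $\Sigma$ appearing in it satisfies $\den(\Sigma) \mid \den(\Po)$, then upgrade this to a half-open triangulation partitioning $\Po$ into a disjoint union of relatively open rational simplices. Lattice point counts are additive over such a disjoint decomposition, and a sum of quasi-polynomials whose periods all divide $\den(\Po)$ has the same property, so it suffices to prove the theorem for a (possibly half-open) rational simplex whose denominator divides $\den(\Po)$.

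For such a simplex $\Sigma$, after restricting to its affine hull I may assume $\Sigma$ has vertices $v_0, \ldots, v_n \in \Q^n$ spanning a full-dimensional simplex, where $n = \dim(\Sigma)$. I then lift to $\R^{n+1}$ via $\hat v_i := (v_i, 1)$ and form the cone $K := \sum_{i=0}^{n} \R_{\geq 0}\, \hat v_i$. The slice $K \cap \{x_{n+1} = m\}$ identifies with $m\Sigma \times \{m\}$, so $\G(\Sigma, m) = \#(\Z^{n+1} \cap K \cap \{x_{n+1} = m\})$. With $d := \den(\Sigma)$, the integer vectors $d\hat v_0, \ldots, d\hat v_n$ are linearly independent, and every lattice point of $K$ decomposes uniquely as $p + \sum_i k_i\, (d\hat v_i)$ with $p$ in the half-open fundamental parallelepiped $\Pi := \{\sum_i \lambda_i (d\hat v_i) : 0 \leq \lambda_i < 1\}$ and each $k_i \in \Z_{\geq 0}$. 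Reading this slice by slice in the last coordinate $x_{n+1}$ yields
$$\sum_{m \geq 0} \G(\Sigma, m)\, t^m \;=\; \frac{f_\Sigma(t)}{(1-t^d)^{n+1}}, \qquad f_\Sigma(t) := \sum_{p \in \Pi \cap \Z^{n+1}} t^{p_{n+1}}.$$
Expanding $(1-t^d)^{-(n+1)} = \sum_{j \geq 0} \binom{j+n}{n} t^{jd}$ and multiplying by $f_\Sigma(t)$ shows that, on each residue class $m \equiv r \pmod{d}$, the coefficient $\G(\Sigma, m)$ is a polynomial in $m$ of degree at most $n$. This is precisely quasi-polynomiality with period dividing $d$, hence dividing $\den(\Po)$.

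For the degree-exactness claim, one verifies that the leading coefficient equals the relative volume $\mathrm{vol}(\Po)$ of $\Po$, normalized so that the affine lattice in $\aff(\Po)$ has covolume one. This drops out of the standard Riemann-sum estimate $m^{-\dim(\Po)}\, \G(\Po, m) \to \mathrm{vol}(\Po)$ as $m \to \infty$ along any fixed residue class modulo $\den(\Po)$, combined with the quasi-polynomiality already established.

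The hardest part, to my mind, is the reduction step rather than the cone computation. A direct inclusion-exclusion across a triangulation introduces lower-dimensional face intersections whose denominators could a priori exceed $\den(\Po)$, which would weaken the period bound to an $\lcm$ of those denominators rather than $\den(\Po)$ itself. The remedy is a half-open (Stanley-type pulling) triangulation that disjointly partitions $\Po$ into half-open simplices of the same denominator; the parallelepiped argument above then extends to each such piece essentially verbatim, with the box $[0,1)^{n+1}$ in the definition of $\Pi$ replaced by the product of open-or-closed intervals prescribed by the half-open structure.
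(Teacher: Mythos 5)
The paper offers no proof of this statement: it is quoted verbatim as Ehrhart's classical theorem, with \cite{Eh62} and \cite{Li11} given as references, so there is nothing internal to compare your argument against. Judged on its own merits, your write-up is the standard Ehrhart-series proof and it is correct: a triangulation introducing no new vertices forces every simplex to have denominator dividing $\den(\Po)$; coning a rational $n$-simplex of denominator $d$ into $\R^{n+1}$ and tiling the cone by $\Z_{\geq0}$-translates of the half-open fundamental parallelepiped of $d\hat v_0,\dots,d\hat v_n$ gives the generating function $f_\Sigma(t)/(1-t^d)^{n+1}$; and extracting coefficients along a fixed residue class modulo $d$ yields a polynomial of degree at most $n$, which is exactly quasi-polynomiality with period dividing $\den(\Po)$. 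One correction to your own commentary: the difficulty you attribute to inclusion-exclusion is not actually there. In a triangulation using only the vertices of $\Po$, the intersections occurring in inclusion-exclusion are common faces, hence simplices whose vertex sets are subsets of the vertex set of $\Po$; their denominators therefore also divide $\den(\Po)$, and plain inclusion-exclusion already gives the claimed period bound (the half-open decomposition is merely tidier bookkeeping, and your parallelepiped argument does adapt to it as you say). Two details worth making explicit in a full write-up: the coefficient formula $\sum_{p}\binom{(m-p_{n+1})/d+n}{n}$ is valid for all $m\geq0$ in the given residue class, not just large $m$, because $p_{n+1}<(n+1)d$ forces the offending binomial coefficients to vanish; and for lower-dimensional simplices the cone argument should be run directly in $\R^{n+1}$ (the lifted vertices are still linearly independent) rather than after a coordinate change on the affine hull, since $\Z^n\cap\aff(\Sigma)$ may be empty. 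As a bonus, your computation also yields the identity $\G_0(\Po,0)=1$, i.e.\ Equation~\eqref{Eq.G_0}, which the paper uses later.
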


Thanks to the theorem above we say that $\G(\Po,m)$ is a quasi-polynomial in $m$ and that $\G_i(\Po)$ is the $i$th coefficient of $\G(\Po,m)$ for any $i\in\{0, \dots, \dim(\Po)\}$. In the case when $\Po$ has vertices in $\Z^n$, it verifies that $\G(\Po,m)$ is actually a polynomial in $m$, \ie all the $\G_i(\Po)$ are constants and they have furthermore a geometric meaning. For instance, if $n=\dim(\Po)=2$ we have
\begin{align}\label{Ehrhart.poly}
\G(\Po,m)=\mathrm{area}(\Po)m^2+\frac{1}{2}\#(\Z^2\cap \partial\Po)m+1 \qquad\qquad\text{for all $m\in\Z_{\geq0}$.}
\end{align}
In particular, Equation~\eqref{Ehrhart.poly} for $m=1$ is known as Pick's theorem.

For any $i\in\{0, \dots, \dim(\Po)\}$, denote by $g_i(\Po)$ the smallest positive integer $g$ such that $\G_i(\Po,m+g)=\G_i(\Po,m)$ for all $m\in\Z_{\geq0}$. 
Then we obviously have $g_i(\Po)\leq \den(\Po)$ for all $i\in\{0, \dots, \dim(\Po)\}$. However, we will need more refined information for which an additional concept of geometric nature is required. 

For any $i\in\{0, \dots, \dim(\Po)\}$, each face of $\Po$ of dimension $i$ is called an $i$-face of $\Po$. For any $M\subset\R^n$ consider the affine hull $\aff(M)$ of $M$, \ie the set
\begin{align*}
\aff(M):=\Bigg\{\sum_{j=1}^m\alpha_jx_j \ \Big| \ m\in\Z_{>0}, \ \alpha_j\in\R, \ \sum_{j=1}^m\alpha_j=1, \ x_j\in M\Bigg\}.
\end{align*}
Then we define the $i$-index $d_i(\Po)$ of $\Po$ as the smallest positive integer $d$ such that the intersection $\Z^n\cap \aff(dF)$ is nonempty for every $i$-face $F$ of $\Po$.

\begin{theorem*}[McMullen, 1978] The minimal period $g_i(\Po)$ of $\G_i(\Po)$ divides the $i$-index $d_i(\Po)$ of $\Po$, for any $i\in\{0, \dots, \dim(\Po)\}$.
\end{theorem*}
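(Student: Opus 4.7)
My plan is to prove McMullen's theorem by following the classical strategy of decomposing the lattice-point enumerator as a sum over open faces of $\Po$ and isolating the contribution of each face to the coefficient $\G_i(\Po, m)$.

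The first step is the open face decomposition
\begin{align*}
\G(\Po, m) \, = \, \sum_F N_F(m), \qquad N_F(m) \, := \, \#\bigl(\Z^n \cap \mathrm{relint}(mF)\bigr),
\end{align*}
which holds because $m\Po$ is the disjoint union of the relative interiors of its scaled faces (here $F$ ranges over all faces of $\Po$, with the convention $\mathrm{relint}(\{v\}) = \{v\}$ for a vertex $v$). For each $j$-face $F$ of $\Po$, one has $N_F(m) = 0$ unless $\aff(mF) \cap \Z^n \neq \emptyset$, a condition that is periodic in $m$ with period exactly equal to the face index $d_F := \min\{d > 0 : \aff(dF) \cap \Z^n \neq \emptyset\}$; when the condition holds, $N_F(m)$ is the open Ehrhart count of $mF$ in the affine lattice $\aff(mF) \cap \Z^n$ and is therefore a quasi-polynomial in $m$ of degree $j$. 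A purely geometric observation, crucial for what follows, is that for every face $F$ of $\Po$ of dimension at least $i$ one has $d_F \mid d_i(\Po)$: indeed, $F$ contains an $i$-face $F'$ of $\Po$, $\aff(F') \subset \aff(F)$, and hence $\aff(d_i(\Po)\,F) \cap \Z^n \supset \aff(d_i(\Po)\,F') \cap \Z^n$, which is nonempty by the very definition of $d_i(\Po)$. Since faces of dimension strictly less than $i$ contribute to $\G(\Po, m)$ only polynomials of $m$-degree $<i$ (hence nothing to $\G_i(\Po, m)$), the $m^i$-coefficient of $\G(\Po, m)$ is a sum of $m^i$-coefficients of $N_F(m)$ taken over faces of dimension at least $i$.

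The technical heart of the argument is showing that, after this summation, the combined $m^i$-coefficient is periodic in $m$ with period dividing $d_i(\Po)$. A priori, the $m^i$-coefficient of an individual $N_F(m)$ with $\dim F > i$ carries periodic fluctuations whose period can exceed $d_F$, coming from the lower-order coefficients of the Ehrhart quasi-polynomial of the polytope $d_F F - p_0$ (for any choice of $p_0 \in \aff(d_F F) \cap \Z^n$) viewed inside the sublattice $L_F := (\aff(F) - \aff(F)) \cap \Z^n$. The main obstacle, and the core of McMullen's original argument, is to show that summation over the face poset produces enough cancellation to collapse these fluctuations to a divisor of $d_i(\Po)$. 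McMullen handled this via a theory of lattice-translation invariant valuations on rational polyhedra, exploiting Ehrhart–Macdonald reciprocity to match contributions from incident pairs of faces; an equivalent modern approach is to compute $\G(\Po, m)$ via a half-open polyhedral decomposition of the cone over $\Po$, where the relevant valuation identities manifest as a clean additive decomposition. In either incarnation, once this cancellation is established the divisibility $g_i(\Po) \mid d_i(\Po)$ follows immediately from the geometric observation above, completing the proof.
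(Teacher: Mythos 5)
First, note that the paper does not prove this statement: it is quoted verbatim as an external result and attributed to McMullen's 1978 article \cite{Mc78}, so there is no in-paper proof to compare yours against. Judged on its own terms, your write-up is a correct and well-organized \emph{reduction}, but not a proof: the entire content of McMullen's theorem is concentrated in the step you label ``the technical heart,'' and there you only name the mechanism (``lattice-translation invariant valuations,'' ``Ehrhart--Macdonald reciprocity to match contributions from incident pairs of faces,'' ``a half-open polyhedral decomposition'') without carrying out any of it. The parts you do prove --- the disjoint decomposition $\G(\Po,m)=\sum_F N_F(m)$ over relative interiors of faces, the fact that $N_F(m)$ vanishes off the arithmetic progression $d_F\Z$, the divisibility $d_F\mid d_i(\Po)$ for $\dim F\geq i$ via an $i$-face $F'\subset F$, and the irrelevance of faces of dimension $<i$ --- are all fine, but they only relocate the difficulty; they do not resolve it.

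The gap is not merely one of omitted detail: the decomposition by itself gives no leverage on the period of $\G_i(\Po,\cdot)$. Indeed, the top-dimensional term of your sum is $N_\Po(m)=\#\bigl(\Z^n\cap\mathrm{relint}(m\Po)\bigr)$, which by Ehrhart--Macdonald reciprocity equals $(-1)^{\dim\Po}\G(\Po,-m)$; its $m^i$-coefficient therefore has \emph{exactly} the minimal period $g_i(\Po)$ you are trying to bound. So any argument that estimates the period of each $N_F$ separately and then takes a least common multiple is circular, and the cancellation across the face poset that you invoke is genuinely indispensable --- it is precisely what McMullen's valuation-theoretic machinery establishes. To turn your outline into a proof you would need to either reproduce that argument (the structure theorem for lattice-invariant valuations showing that $\G_i(\Po,m)$ depends only on the $\Z^n$-translation classes of the affine hulls of the faces of $m\Po$ of dimension $\geq i$) or substitute a complete generating-function/half-open-cone computation; as it stands, the claimed divisibility $g_i(\Po)\mid d_i(\Po)$ is asserted, not derived.
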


Finally, another well-known result states that the $\G_i(\Po)$ satisfy a certain homogeneity property. Indeed, if $m, t\in\Z_{\geq0}$, then
\begin{align}\label{Homo.prop.Gi}
\G_i(m\Po,t)=m^i\G_i(\Po,mt)\qquad\qquad\text{for all $i\in\{0, \dots, \dim(\Po)\}$.}
\end{align}
Moreover, we remark that Ehrhart's theorem shows that 
\begin{align}\label{Eq.G_0}
\G_0(\Po,0)=1.
\end{align}

\subsection{The $0$th coefficient of certain Ehrhart quasi-polynomials} In this subsection we consider a matrix
\begin{align*}
\gamma=\begin{pmatrix}
a&b\\c&d
\end{pmatrix} \ \in \ \SL_2(\Z) \qquad\text{with $c>0$}
\end{align*}
and content $\ell_\gamma=\gcd(c,a-1)$. Suppose that $\ell_\gamma>1$ and let $x=(m/\ell_\gamma,0)$ with $m\in\Z$ such that $0<m<\ell_\gamma$. Recall that $T_{\gamma}$ is the triangle in $\R^2$ with vertices at the origin, $(a-1,c)$, and $(-1,0)$, and also that $T_{\gamma,x}:=\{m/\ell_\gamma\}T_{\gamma}=(m/\ell_\gamma)T_{\gamma}$. Thanks to Lemma~\ref{Lemma.2.5}, Lemma~\ref{Lemma.2.4}, Lemma~\ref{Lemma.2.3}, and Ehrhart's theorem, we have
\begin{align*}
\G(\ell_\gamma^{-1}T_\gamma, m)&=\Big(\sum_{\lambda\in\Z^2}w_{\gamma,x}(\lambda)\Big)+\frac{1}{2}\#(\Z^2\cap \partial T_{\gamma,x})\\
&=\Big(\sum_{\lambda\in\Z^2}w_{\gamma,x}(\lambda)\Big)+\frac{1}{2}(1+m)\\
&=\G_2(\ell_\gamma^{-1}T_\gamma,m)m^2+\G_1(\ell_\gamma^{-1}T_\gamma,m)m+\G_0(\ell_\gamma^{-1}T_\gamma,m)
\end{align*}
with $w_{\gamma,x}$ as in Theorem~\ref{Hayes.Espinoza.thm.}. 

Note that $\ell_\gamma^{-1}T_\gamma$ has exactly two integral vertices: the origin and $\ell_\gamma^{-1}(a-1,c)$. This shows that $d_1(\ell_\gamma^{-1}T_\gamma)=d_2(\ell_\gamma^{-1}T_\gamma)=1$, whereas $d_0(\ell_\gamma^{-1}T_\gamma)=\ell_\gamma$. Hence McMullen's theorem implies that $\G_1(\ell_\gamma^{-1}T_\gamma,m)$ and $\G_2(\ell_\gamma^{-1}T_\gamma,m)$ are constant as functions of $m$. Then
\begin{align*}
\G_2(\ell_\gamma^{-1}T_\gamma,m)=\G_2(\ell_\gamma^{-1}T_\gamma,\ell_\gamma)=\G_2(T_\gamma,1)\Big(\frac{1}{\ell_\gamma}\Big)^2=\mathrm{area}(T_\gamma)\Big(\frac{1}{\ell_\gamma}\Big)^2=\frac{c}{2}\Big(\frac{1}{\ell_\gamma}\Big)^2,
\end{align*}
where we used \eqref{Homo.prop.Gi} and \eqref{Ehrhart.poly}. Similarly, 
\begin{align*}
\G_1(\ell_\gamma^{-1}T_\gamma,m)=\G_1(\ell_\gamma^{-1}T_\gamma,\ell_\gamma)=\G_1(T_\gamma,1)\Big(\frac{1}{\ell_\gamma}\Big)=\frac{1}{2}\#(\Z^2\cap \partial T_\gamma)\Big(\frac{1}{\ell_\gamma}\Big).
\end{align*}
Since $T_\gamma$ has only integral vertices, it is easy to prove that $\#(\Z^2\cap \partial T_\gamma)=\ell_\gamma+2$. All in all,
Theorem~\ref{Hayes.Espinoza.thm.} shows that
\begin{align}\label{Eq.4.2}
\ZZ_0(\gamma)(x,\omega)-\ZZ_0(\gamma)(\ell_\gamma x,\ell_\gamma^{-1}\omega)=\G_0(\ell_\gamma^{-1}T_\gamma,m)+\{x_1\}+\frac{\sgn(a+c\tau)-5}{4}
\end{align}
for all $\omega\in\Mirr_{2\times1}(\R)$, where $\{x_1\}=m/\ell_\gamma$ and $\tau=\omega_2/\omega_1$. 

We now show that Equation~\eqref{Eq.4.2} generalizes \eqref{Hayes.Espinoza.formula}. First note that the left-hand side of \eqref{Eq.4.2} is $0$ if $x_1$ is an integer, as $\ZZ_0(\gamma)(x,\omega)$ is $\Z^2$-periodic in the variable $x$ and homogeneous (of degree $0$) in the variable $\omega$. Then note that $\G_0(\ell_\gamma^{-1}T_\gamma,m)$ is $\ell_\gamma$-periodic in the variable $m$ since $d_0(\ell_\gamma^{-1}T_\gamma)=\ell_\gamma$. Furthermore, in view of \eqref{Eq.G_0}, the right-hand side of \eqref{Eq.4.2} equals $(\sgn(a+c\tau)-1)/4$ if we allow $m$ to be $0$ and $x_1$ to be an integer. Finally, if $\ell_\gamma=1$, the left-hand side of \eqref{Eq.4.2} is 0, whereas the right-hand side equals $(\sgn(a+c\tau)-1)/4$ again. Therefore, we have proved the following result.

\begin{proposition}\label{Ehrhart.Espinoza.Prop} 
Let $\gamma\in\SL_2(\Z)$ with $c>0$, let $m\in\Z_{\geq0}$, and let $\omega\in\Mirr_{2\times1}(\R)$. Then
\begin{align*}
\ZZ_0(\gamma)(x,\omega)-&\ZZ_0(\gamma)(\ell_\gamma x,\ell_\gamma^{-1}\omega)=\\
&\G_0(\ell_\gamma^{-1}T_\gamma,m)+\Big\{\frac{m}{\ell_\gamma}\Big\}-1+\Big(1-\ind_\Z\!\Big(\frac{m}{\ell_\gamma}\Big)\Big)\frac{\sgn(a+c\tau)-1}{4},
\end{align*}
where $x:=(m/\ell_\gamma,0)$ and $\tau:=\omega_2/\omega_1$.
\end{proposition}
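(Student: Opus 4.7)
The plan is to split into cases according to whether $x_1=m/\ell_\gamma$ is a non-integer, which is precisely the hypothesis needed to invoke Theorem~\ref{Hayes.Espinoza.thm.}. In the \emph{generic case} $\ell_\gamma>1$ and $m\not\equiv 0\pmod{\ell_\gamma}$, I would first apply Theorem~\ref{Hayes.Espinoza.thm.} to rewrite
\[
\ZZ_0(\gamma)(x,\omega)-\ZZ_0(\gamma)(\ell_\gamma x,\ell_\gamma^{-1}\omega)=\sum_{\lambda\in\Z^2}w_{\gamma,x}(\lambda)-\mathrm{area}(T_{\gamma,x})+\tfrac{\sgn(a+c\tau)-3}{4},
\]
then split the weighted lattice sum as $\sum_\lambda w_{\gamma,x}(\lambda)=\G(\ell_\gamma^{-1}T_\gamma,m)-\tfrac{1}{2}\#(\Z^2\cap\partial T_{\gamma,x})$ and decompose $\G(\ell_\gamma^{-1}T_\gamma,m)=\G_2 m^2+\G_1 m+\G_0(\ell_\gamma^{-1}T_\gamma,m)$. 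The key inputs are: the boundary-count Lemmas~\ref{Lemma.2.3}--\ref{Lemma.2.5}, which give $\#(\Z^2\cap\partial T_{\gamma,x})=1+m$; the observation that $\ell_\gamma^{-1}T_\gamma$ has two integral vertices $(0,0)$ and $\ell_\gamma^{-1}(a-1,c)$, forcing $d_1=d_2=1$, so that McMullen's theorem makes $\G_1,\G_2$ constant; and the homogeneity relation~\eqref{Homo.prop.Gi} combined with Ehrhart's formula~\eqref{Ehrhart.poly} applied to the integral triangle $T_\gamma$ (whose boundary carries $\ell_\gamma+2$ lattice points), yielding $\G_2(\ell_\gamma^{-1}T_\gamma)=c/(2\ell_\gamma^{2})$ and $\G_1(\ell_\gamma^{-1}T_\gamma)=(\ell_\gamma+2)/(2\ell_\gamma)$. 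A short algebraic simplification then recovers the Proposition when $\ind_\Z(m/\ell_\gamma)=0$.

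For the \emph{boundary cases} $m/\ell_\gamma\in\Z$ (equivalently $\ell_\gamma\mid m$, or $\ell_\gamma=1$), the left-hand side vanishes because $\ZZ_0(\gamma)(\,\cdot\,,\omega)$ is $\Z^2$-periodic in its first argument and degree-$0$ homogeneous in $\omega$, so both $\ZZ_0(\gamma)(x,\omega)$ and $\ZZ_0(\gamma)(\ell_\gamma x,\ell_\gamma^{-1}\omega)$ collapse to $\ZZ_0(\gamma)(0,\omega)$. On the right, $\ind_\Z(m/\ell_\gamma)=1$ kills the sign term and $\{m/\ell_\gamma\}=0$; McMullen's theorem says $\G_0(\ell_\gamma^{-1}T_\gamma,\cdot)$ has period dividing $d_0=\ell_\gamma$, so $\G_0(\ell_\gamma^{-1}T_\gamma,m)=\G_0(\ell_\gamma^{-1}T_\gamma,0)=1$ by~\eqref{Eq.G_0} (when $\ell_\gamma=1$ the triangle $T_\gamma$ is already integral, giving $\G_0(T_\gamma,m)\equiv 1$ directly from~\eqref{Ehrhart.poly}). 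In either subcase the right-hand side equals $1+0-1=0$, matching the left.

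The only substantive step is the generic case, and the main obstacle there is bookkeeping: I must verify that the linear and quadratic Ehrhart coefficients, the area term, and the half-boundary correction combine so as to leave precisely $\G_0(\ell_\gamma^{-1}T_\gamma,m)$, $\{m/\ell_\gamma\}$, and the shifted sign constant, with nothing else surviving. This depends crucially on the refined period information from McMullen's theorem together with the explicit count $\#(\Z^2\cap\partial T_\gamma)=\ell_\gamma+2$; without the former one could not conclude that $\G_1$ and $\G_2$ are genuine constants rather than more general $\ell_\gamma$-quasi-constants, and the simplification would fail to close.
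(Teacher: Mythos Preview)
Your proposal is correct and follows essentially the same route as the paper: invoke Theorem~\ref{Hayes.Espinoza.thm.} in the generic case, use Lemmas~\ref{Lemma.2.3}--\ref{Lemma.2.5} for the boundary count, use McMullen's theorem plus homogeneity~\eqref{Homo.prop.Gi} and~\eqref{Ehrhart.poly} to identify $\G_1,\G_2$, and handle $m/\ell_\gamma\in\Z$ by periodicity and~\eqref{Eq.G_0}. One small bookkeeping point: your identification $\sum_\lambda w_{\gamma,x}(\lambda)=\G(\ell_\gamma^{-1}T_\gamma,m)-\tfrac12\#(\Z^2\cap\partial T_{\gamma,x})$ and your count $\#(\Z^2\cap\partial T_{\gamma,x})=1+m$ are literally valid only for $0<m<\ell_\gamma$ (since $T_{\gamma,x}=\{m/\ell_\gamma\}T_\gamma$, not $(m/\ell_\gamma)T_\gamma$); the paper first restricts to this range and then extends to all $m\in\Z_{\ge0}$ by the $\ell_\gamma$-periodicity of both sides, which you already invoke in the boundary case.
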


\subsection{The content of the product of two matrices}

We now establish a handy relation concerning the content $\ell_{\gamma\gamma'}$ of the product of a pair of $\SL_2(\Z)$ matrices $\gamma$ and $\gamma'$. 

Let 
\begin{align}\label{gamma&gamma'}
\gamma=\begin{pmatrix}
a&b\\c&d
\end{pmatrix} \qquad\text{and}\qquad \gamma'=\begin{pmatrix}
a'&b'\\c'&d'
\end{pmatrix} \qquad \text{in $\SL_2(\Z)$.}
\end{align}
Note that the content of the product $\gamma\gamma'$ is 
\begin{align*}
\ell_{\gamma\gamma'}=\gcd(ca'+dc',\,a'a+bc'-1)
\end{align*}
by definition. Let $\ell:=\gcd(\ell_\gamma, \ell_{\gamma'})$. Since $\ell$ divides both $\ell_\gamma$ and $\ell_{\gamma'}$, $\ell$ divides $c$ and $c'$, so it also divides $ca'+dc'$. Similarly, we have that $\ell$ divides $a-1$, $a'-1$, and $c'$. Hence $\ell$ divides 
\begin{align*}
bc'+a(a'-1)+a-1=aa'+bc'-1.
\end{align*}
Therefore we have proved the following.

\begin{lemma}\label{content.mult.prop}
$\gcd(\ell_\gamma, \ell_{\gamma'})$ divides $\ell_{\gamma\gamma'}$.
\end{lemma}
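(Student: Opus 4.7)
The plan is to verify directly that $\ell := \gcd(\ell_\gamma, \ell_{\gamma'})$ divides each of the two integers whose gcd defines $\ell_{\gamma\gamma'}$. After multiplying out, the relevant entries of $\gamma\gamma'$ are the $(2,1)$ entry $ca'+dc'$ and the $(1,1)$ entry $aa'+bc'$, so by definition
\begin{align*}
\ell_{\gamma\gamma'} = \gcd\bigl(ca'+dc',\ aa'+bc'-1\bigr).
\end{align*}
Showing that $\ell$ divides both arguments of this gcd will force $\ell \mid \ell_{\gamma\gamma'}$.

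The first divisibility is essentially free: since $\ell \mid \ell_\gamma$ it divides $c$, and since $\ell \mid \ell_{\gamma'}$ it divides $c'$, so $\ell \mid ca' + dc'$ regardless of $a'$ and $d$.

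The second divisibility is the only step requiring a mild idea. I would split the expression as
\begin{align*}
aa' + bc' - 1 \;=\; a(a'-1) + (a-1) + bc'.
\end{align*}
The factor $a'-1$ is divisible by $\ell_{\gamma'}$, the factor $a-1$ is divisible by $\ell_\gamma$, and $c'$ is divisible by $\ell_{\gamma'}$; in each case $\ell$ therefore divides the summand. Summing the three terms gives $\ell \mid aa'+bc'-1$, which together with the previous paragraph finishes the proof.

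I do not foresee any real obstacle: the only non-mechanical move is spotting the regrouping of $aa'+bc'-1$ above, and this is essentially forced by asking which combinations of $a-1$, $a'-1$, $c$, $c'$ must be produced in order to use the definitions of $\ell_\gamma$ and $\ell_{\gamma'}$. Notice that the argument does not use the Bezout-like converse divisibility $\ell_{\gamma\gamma'} \mid \ell$, and indeed such a converse is not claimed in the lemma.
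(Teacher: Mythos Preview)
Your proof is correct and essentially identical to the paper's own argument: the paper also notes $\ell\mid c$ and $\ell\mid c'$ to get $\ell\mid ca'+dc'$, and then uses the same regrouping $aa'+bc'-1 = bc' + a(a'-1) + (a-1)$ to conclude $\ell\mid aa'+bc'-1$.
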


\subsection{Proof of Theorem~\ref{Ehrhart.Espinoza.thm}}

We are now in position to address the proof of our last theorem. We let $\ell:=\gcd(\ell_\gamma,\ell_{\gamma'})$ for the matrices $\gamma$ and $\gamma'$ in \eqref{gamma&gamma'}. Suppose that $c$, $c'$, and $ca'+dc'$ are positive. We must prove \eqref{Ehrhart.Espinoza.formula}, \ie
\begin{align*}
\G_0(\ell^{-1}T_{\gamma\gamma'}\,,\,m)\,=\,\G_0(\ell^{-1}T_\gamma\,,\,m)\,+\,\G_0(\ell^{-1}T_{\gamma'}\,,\,m)\,+\,\{m/\ell\}\,-\,1
\end{align*}
for all $m\in\Z_{\geq0}$.

Let $m\in\Z_{\geq0}$ and $\omega\in\Mirr_{2\times1}(\R)$. We use the cocycle property of $\ZZ_0$ to show that
\begin{align*}
\ZZ_0(\gamma\gamma')\!\Big(\Big(\frac{m}{\ell},0\Big),\,\omega\Big)=\ZZ_0(\gamma)\!\Big(\Big(\frac{m}{\ell},0\Big),\,\omega\Big)+\ZZ_0(\gamma')\!\Big(\Big(\frac{m}{\ell},0\Big)\gamma^{-\Tr},\,\gamma^\Tr\omega\Big).
\end{align*}
Then we note that 
\begin{align*}
\Big(\frac{m}{\ell},0\Big)\gamma^{-\Tr}=\Big(\frac{md}{\ell}, -\frac{mc}{\ell}\Big)\equiv \Big(\frac{m}{\ell},0\Big)\mod\Z^2
\end{align*}
since $\ell$ divides both $d-1$ and $c$ (see \eqref{content.change.prop}). Hence,
\begin{align}\label{Eq.4.3}
\ZZ_0(\gamma\gamma')\!\Big(\Big(\frac{m}{\ell},0\Big),\,\omega\Big)=\ZZ_0(\gamma)\!\Big(\Big(\frac{m}{\ell},0\Big),\,\omega\Big)+\ZZ_0(\gamma')\!\Big(\Big(\frac{m}{\ell},0\Big),\,\gamma^\Tr\omega\Big).
\end{align}
Similarly, we get
\begin{align}\label{Eq.4.4}
\ZZ_0(\gamma\gamma')\!\big((m,0),\,\omega\big)=\ZZ_0(\gamma)\!\big((m,0),\,\omega\big)+\ZZ_0(\gamma')\!\big((m,0),\,\gamma^\Tr\omega\big)
\end{align}
by using that $(m,0)\gamma^{-\Tr}\equiv(m,0)\mod\Z^2$. Therefore, Proposition~\ref{Ehrhart.Espinoza.Prop}, \eqref{Eq.4.3}, \eqref{Eq.4.4}, and Lemma~\ref{content.mult.prop} imply that
\begin{align*}
&\G_0\!\Big(\frac{T_{\gamma\gamma'}}{\ell_{\gamma\gamma'}},\,\frac{\ell_{\gamma\gamma'}m}{\ell}\Big)+\Big\{\frac{m}{\ell}\Big\}-1+\Big(1-\ind_\Z\!\Big(\frac{m}{\ell}\Big)\Big)\frac{\sgn\big((a'a+bc')+(ca'+dc')\tau\big)-1}{4}\\
&=\G_0\!\Big(\frac{T_{\gamma}}{\ell_{\gamma}},\,\frac{\ell_{\gamma}m}{\ell}\Big)+\Big\{\frac{m}{\ell}\Big\}-1+\Big(1-\ind_\Z\!\Big(\frac{m}{\ell}\Big)\Big)\frac{\sgn(a+c\tau)-1}{4}\\
&\qquad\qquad+\G_0\!\Big(\frac{T_{\gamma'}}{\ell_{\gamma'}},\,\frac{\ell_{\gamma'}m}{\ell}\Big)+\Big\{\frac{m}{\ell}\Big\}-1+\Big(1-\ind_\Z\!\Big(\frac{m}{\ell}\Big)\Big)\frac{\sgn\big(a'+c'\frac{b+d\tau}{a+c\tau}\big)-1}{4}.
\end{align*}
Note that the last formula holds for all $\omega\in\Mirr_{2\times1}(\R)$. Then we choose $\tau$ big enough so that 
\begin{align*}
\tau>-\frac{aa'+bc'}{ca'+dc'}, \qquad\tau>-\frac{a}{c}, \qquad\tau>-\frac{c'b+a'a}{ca'+dc'}.
\end{align*}
This gives respectively
\begin{align*}
(a'a+bc')+(ca'+dc')\tau>0, \qquad a+c\tau>0, \qquad a'+c'\frac{b+d\tau}{a+c\tau}>0
\end{align*}
since $c$, $c'$, and $ca'+dc'$ are positive. Hence,
\begin{align*}
\G_0\!\Big(\frac{T_{\gamma\gamma'}}{\ell_{\gamma\gamma'}},\,\frac{\ell_{\gamma\gamma'}m}{\ell}\Big)
=\G_0\!\Big(\frac{T_{\gamma}}{\ell_{\gamma}},\,\frac{\ell_{\gamma}m}{\ell}\Big)
+\G_0\!\Big(\frac{T_{\gamma'}}{\ell_{\gamma'}},\,\frac{\ell_{\gamma'}m}{\ell}\Big)+\Big\{\frac{m}{\ell}\Big\}-1.
\end{align*}
Therefore, we finish the proof by using \eqref{Homo.prop.Gi}.

\end{document}